\let\NAT@parse\undefined
\let\ieeebibliography\thebibliography
\renewcommand\thebibliography[1]{\ieeebibliography{#1}}
\newenvironment{ass-enumerate}{\begin{enumerate}}{\end{enumerate}}
\newcommand\mto{\rightrightarrows}
\newcommand\Def[1]{:#1}
\DeclareMathOperator{\domain}{dom}
\DeclareMathOperator{\graph}{graph}
\DeclareMathOperator*{\argmin}{arg\,min}
\DeclareMathOperator{\lipschitz}{lip}
\title{\LARGE \bf
Input-to-State Stability of Newton Methods for Generalized Equations in Nonlinear Optimization$^\star$
}
\author{Torbjørn Cunis$^{1,2}$ and Ilya Kolmanovsky$^{2}$% and Eduardo Sontag$^{3}$% <-this % stops a space
\thanks{$^\star$\added[id=IK]{This research is supported in part by AFOSR
grant number FA9550-20-1-0385.}}% <-this % stops a space
\thanks{$^{1}$Institute of Flight Mechanics and Control,
        University of Stuttgart, 70569 Stuttgart, Germany.
        {\tt\small tcunis@ifr.uni-stuttgart.de}}%
\thanks{$^{2}$Department of Aerospace Engineering, University of Michigan,
        Ann Arbor, MI 48104, USA.
        {\tt\small \{tcunis | ilya\}@umich.edu}}%
% \thanks{$^{3}$Departments of Electrical and Computer Engineering as well as Bioengineering, Northeastern University, Boston, 05112, USA.
%         {\tt\small e.sontag@northeastern.edu}}%
}
\begin{document}

\maketitle
\thispagestyle{empty}
\pagestyle{empty}

%%%%%%%%%%%%%%%%%%%%%%%%%%%%%%%%%%%%%%%%%%%%%%%%%%%%%%%%%%%%%%%%%%%%%%%%%%%%%%%%
\begin{abstract}

We show that Newton method{s} for generalized equations are input-to-state stable with respect to disturbances such as \added[id=IK]{due to} inexact {computations}. We then use this result to obtain convergence and robustness of a {multistep Newton-type} method {for multivariate generalized equations}. We demonstrate the usefulness of the result{s} with {other} applications {to} nonlinear optimization. In particular, we provide a new proof for (robust) local convergence of the augmented Lagrangian method.

\end{abstract}

%%%%%%%%%%%%%%%%%%%%%%%%%%%%%%%%%%%%%%%%%%%%%%%%%%%%%%%%%%%%%%%%%%%%%%%%%%%%%%%%
\section{Introduction}
Generalized equations are set-valued inequalities 
\begin{align}
    \label{eq:generalized}
    f(z) + F(z) \ni 0
\end{align}
where $f$ is a function and $F$ is a set-valued map. In nonlinear optimization, generalized equations appear frequently as the Karush--Kuhn--Tucker (KKT) system of necessary conditions \cite{Giorgi2004} with $f$ being the gradient of the Lagrangian, $F$ the normal cone to the constraint set, and $z$ {aggregating} primal and dual decision variables. Optimization algorithms can often be interpreted as solving \eqref{eq:generalized} for its root $\bar z$, the optimal solution. A common technique is Newton's method for generalized equations, which yields the iteration
\begin{align}
    \label{eq:newton}
    f(z_k) + \nabla f(z_k) (z_{k+1} - z_k) + F(z_{k+1}) \ni 0
\end{align}
and which, when applied to the KKT system, is better known as sequential quadratic programming \cite{Izmailov2014}. If the gradient {of $f$} in \eqref{eq:newton} does not exist or is unknown, Newton's method can be extended to the broader class of quasi-Newton {and Josephy-Newton} methods which include projected gradient descent and sequential convex (linear) programming. %\deleted[id=IK]{Newton's method as well as quasi-Newton methods have long-studied} 
Robust local convergence properties {of Newton methods} {have been studied} under metric regularity assumptions \cite{Coleman1984, Dokov1998, Dontchev2010, Dontchev2013}.

In recent years, %\deleted[id=IK]{researchers have studied dynamic system}
properties of optimization algorithms {have been studied when interconnected} with dynamic systems {and used to generate control actions} \cite{Liao-McPherson2020a, Leung2021b, Belgioioso2021, Hauswirth2021, Skibik2023}. A common framework here is \added[id=IK]{the one of} input-to-state stability (ISS) which combines concepts of robust stability and asymptotic gains with dissipation theory \cite{Sontag2004}. Previously, ISS was proven for \added[id=TC]{classical iterative methods for linear equations \cite{Hasan2013,Colabufo2020},} gradient descent \cite{Sontag2022}, and proximal gradient descent \cite{Cunis2023ifac}. In addition, the result {in} \cite{Dontchev2010} {on} stability of the sequence generated by \eqref{eq:newton} %\deleted[id=TC]{under the $\ell_\infty$-norm} 
can be considered {as an ISS-like} result. {On the other hand, previous works on perturbed Newton methods for generalized equations such as \cite{Dokov1998, Dontchev2010} treated the input as a static deviation of the \highlight[id=IK]{limit point}.} Establishing ISS of Newton method{s} for generalized equations {enables the treatment of dynamic and time-varying perturbations, which are common in, e.g., the} analysis of interconnected optimization algorithms and optimization-based feedback.

The contributions of our paper are threefold: 
First{ly}, we formally prove {local} {ISS} of Newton method{s} \added[id=TC]{for generalized equations} {in the presence of} \added[id=TC]{generic} disturbances \added[id=TC]{including} \added[id=IK]{due to} inexact {computations} or erroneous gradients. 
Second{ly}, we propose a {multistep Newton-type} method for multivariate generalized equations{, which allows for lower-dimensional partial updates,} and prove its robust local convergence using the ISS property. 
We then demonstrate that the result of \cite{Dontchev2010} follows immediately from ISS. 
Thirdly, we apply our framework and ISS results to {approximate sequential programming} and the augmented Lagrangian method. %\deleted[id=IK]{Here, we avoid explicit use of gradients of the inner loop, hence relaxing the regularity assumptions.}

\begin{table*}
    \centering
    \caption{{Josephy}-Newton methods to solve \eqref{eq:kkt}.}
    \label{tab:quasi-newton}
    \begin{tabular}{| p{.1\textwidth} | p{.3\textwidth} | p{.4\textwidth} |}
        \hline
        Algorithm & Choice of $H(z_k)$ & Interpretation \\
        \hline
        Sequential Quadratic Programming 
            & $\begin{pmatrix} \nabla^2 (h(x_k) + \langle g(x_k), y_k \rangle) & \nabla g(x_k)^* \\ \nabla g(x_k) & 0 \end{pmatrix}$ 
            & $(x,y)_{k+1}$ is the primal-dual solution to a quadratic program with linear constraints.
            %$x_{k+1} \in \argmin H_{11} (x_{k+1} - x_k)^2 + \nabla f(x_k)(x_{k+1} - x_k)$ s.t. $g(x_k) + \nabla g(x_k)(x_{k+1} - x_k) = 0$ 
        \\
        \hline
        Sequential Convexification 
            & $\begin{pmatrix} 0 & \nabla g(x_k)^* \\ \nabla g(x_k) & 0 \end{pmatrix}$ 
            & $(x,y)_{k+1}$ is the primal-dual solution to a linear program. 
            %$\min \nabla f(x_k)(x_{k+1} - x_k)$ $\text{s.t.} \; g(x_k) + \nabla g(x_k)(x_{k+1} - x_k) = 0$ 
        \\
        \hline
        Projected Gradient Descent %\footnotemark 
            & $\alpha^{-1} \mathbb I$ with $\alpha > 0$ 
            & $x_{k+1}$ is the projection of $\big(x_k - \alpha \nabla h(x_k) - \alpha g(x_k)^* y_k\big)$ onto $C$; and $y_{k+1} = y_k - \alpha g(x_k)$.
            %$x_{k+1} = \operatorname{Proj}_C(x_k - \alpha \nabla f(x_k))$ 
        \\
        \hline
    \end{tabular}
\end{table*}

\comment[id=TC]{Check if Lipschitz continuity in $(x,p)$ is sufficient for $\widehat {\operatorname {lip}}_x (\phi - f_H) = 0$.}

\section{Preliminaries}
If not noted otherwise, all spaces are considered either finite-dimensional or complete (Banach) vector spaces with norm $\|\cdot\|$. %\deleted[id=TC]{The distance to a set $C \subset X$ is denoted by $\|\cdot\|_C$.}
A set-valued map $F$ between vector spaces $X$ and $Y$, denoted $F: X \mto Y$, takes values $F(x) \subset Y$ for any $x \in X$. We define the domain and graph of $F$ as $\domain F = \{ x \in X \, | \, F(x) \neq \varnothing \}$ and $\graph F = \{ (x, y) \, | \, y \in F(x) \}$, respectively. For {a} closed convex set $C \subset X$, the normal cone mapping is $N_C(\bar x) = \{ y \in X^* \, | \, \forall x \in C, \langle y, x - \bar x \rangle \leq 0 \}$ if $\bar x \in C$ and $N_C(\bar x) = \varnothing$ else{, where $X^*$ is the dual space of $X$}. {The gradient of a function $f: X \to Y$ at $\bar x \in X$, if existing, is a linear operator $\nabla f(\bar x): X \to Y$; and we will assume that any gradient, if existing, is Lipschitz continuous around $\bar x$.}

\subsection{Continuity \& Regularity}
A set-valued mapping $F: X \mto Y$ is said to be {\em Lipschitz continuous} on $D \subset X$ with constant $\kappa \geq 0$ if $D$ is nonempty, $F(x)$ is closed, and
\begin{align}
    \label{eq:lipschitz}
    F(x') \subset \{ y' \in Y \, | \, \exists y \in F(x), \| y - y' \| \leq \kappa \| x - x' \| \}
\end{align}
for all $x, x' \in D$. The condition \eqref{eq:lipschitz} reduces to the classical Lipschitz continuity of functions if $F$ {is single-valued} on $D$. 
Let $U \subset X$ and $V \subset Y$ be neighbourhoods of $(\bar x, \bar y) \in \graph F$; the mapping $F$ has the {\em isolated calmness} property at $\bar x$ for $\bar y$ {with constant $\kappa$} if $F(\bar x) \cap V = \{\bar y\}$ and $x \mapsto F(x) \cap V$ satisfies \eqref{eq:lipschitz} for $x = \bar x$ and all $x' \in U$. Moreover, a function $f: X \to Y$ {which is Lipschitz continuous in a neighbourhood of $\bar x$ with constant $\kappa$} is a {\em {(Lipschitz) continuous} single-valued localization} of $F$ at $\bar x$ {with constant $\kappa$} for $\bar y$ if $F(x) \cap V = \{f(x)\}$ for all $x \in U$. 

\pagebreak

We now define notions of regularity.

\begin{definition}
    Take $(\bar x, \bar y) \in \graph F$; the mapping $F$ is {\em strongly regular} at $\bar x$ for $\bar y$ with {constant $\kappa$} if and only if $F^{-1}$ has a \added[id=IK]{Lipschitz} {continuous} single-valued localization at $\bar y$ for $\bar x$ {with constant $\kappa$}.
\end{definition}

\begin{definition}
    Take $(\bar x, \bar y) \in \graph F$; the mapping $F$ is {\em strongly subregular} at $\bar x$ for $\bar y$ {with constant $\kappa$} if and only if $F^{-1}$ has the isolated calmness property at $\bar y$ for $\bar x$ {with constant $\kappa$}.
\end{definition}

Recall that $F^{-1}(y) = \{ x \in X \, | \, y \in F(x) \}$ for all $y \in Y$. \added[id=IK]{Thus,} we say that $F$ is strongly regular (subregular) with constant $\kappa \geq 0$ if $\kappa$ is the constant of the Lipschitz continuous localization (isolated calmness property) of $F^{-1}$.

\begin{proposition}
    Let $F: X \mto Y$ be strongly regular (subregular) at $\bar x$ for $\bar y$ with constant $\kappa \geq 0$ and $(\bar x, \bar y) \in \graph F$; if $g: X \to Y$ is Lipschitz continuous with constant $\mu \in [0, \kappa^{-1})$, then $(g + F)$ is strongly regular (subregular) at $\bar x$ for $g(\bar x) + \bar y$ with constant $\kappa/(1 - \kappa \mu)$.
\end{proposition}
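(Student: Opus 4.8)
The plan is to treat both cases by reducing the inversion of $g+F$ to that of $F$ through the elementary equivalence $w - g(x) \in F(x) \iff x \in F^{-1}(w - g(x))$, and then to exploit the small Lipschitz modulus $\mu < \kappa^{-1}$ by a contraction argument à la Lyusternik--Graves. Throughout I write $\bar w \Def= g(\bar x) + \bar y$ and note $\bar x \in (g+F)^{-1}(\bar w)$.

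For the \emph{strongly regular} case, let $\sigma$ be the Lipschitz continuous single-valued localization of $F^{-1}$ at $\bar y$ for $\bar x$ with constant $\kappa$, defined on neighbourhoods $V$ of $\bar y$ and $U$ of $\bar x$. For $w$ near $\bar w$ introduce $\Phi_w \colon x \mapsto \sigma(w - g(x))$, so that $\Phi_{\bar w}(\bar x) = \sigma(\bar y) = \bar x$ and $\| \Phi_w(x) - \Phi_w(x') \| \leq \kappa \| g(x) - g(x') \| \leq \kappa\mu \, \| x - x' \|$, i.e.\ $\Phi_w$ is a contraction with modulus $\kappa\mu < 1$. Fixing a closed ball $\bar B_r(\bar x) \subset U$ small enough that $w - g(x)$ stays in $V$ for $x \in \bar B_r(\bar x)$ and $w$ in a neighbourhood of $\bar w$, and shrinking that neighbourhood further so that $\kappa \| w - \bar w \| \leq (1 - \kappa\mu) r$, one checks $\Phi_w$ maps $\bar B_r(\bar x)$ into itself; the Banach fixed point theorem then gives a unique fixed point $x = s(w)$ in $\bar B_r(\bar x)$, which is precisely the single-valued localization of $(g+F)^{-1}$ at $\bar w$ for $\bar x$. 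Subtracting the identities $s(w) = \sigma(w - g(s(w)))$ and $s(w') = \sigma(w' - g(s(w')))$ yields $\| s(w) - s(w') \| \leq \kappa \big( \| w - w' \| + \mu \| s(w) - s(w') \| \big)$, hence $\| s(w) - s(w') \| \leq \tfrac{\kappa}{1 - \kappa\mu} \| w - w' \|$, the claimed constant.

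For the \emph{strongly subregular} case no existence is needed. Isolated calmness of $F^{-1}$ at $\bar y$ for $\bar x$ gives a neighbourhood $U$ of $\bar x$ with $F^{-1}(\bar y) \cap U = \{\bar x\}$ and $\| x' - \bar x \| \leq \kappa \| y' - \bar y \|$ whenever $x' \in F^{-1}(y') \cap U$ with $y'$ near $\bar y$. If $x' \in (g+F)^{-1}(w') \cap U$ with $w'$ close enough to $\bar w$ that $w' - g(x')$ lies in the relevant neighbourhood of $\bar y$ (using continuity of $g$ and of $x' \mapsto x'$ near $\bar x$), then $x' \in F^{-1}(w' - g(x'))$ gives $\| x' - \bar x \| \leq \kappa \| w' - g(x') - \bar y \| \leq \kappa \big( \| w' - \bar w \| + \mu \| x' - \bar x \| \big)$; rearranging yields $\| x' - \bar x \| \leq \tfrac{\kappa}{1 - \kappa\mu} \| w' - \bar w \|$, which for $w' = \bar w$ forces $x' = \bar x$ and otherwise is exactly the calmness estimate.

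I expect the only genuine obstacle to be the neighbourhood bookkeeping in the strongly regular case: ensuring that the nested choice of radius $r$ and of the neighbourhood of $\bar w$ makes $\Phi_w$ a self-map of a complete metric space, and arguing that the fixed point thus produced is the \emph{unique} solution of $w - g(x) \in F(x)$ within the chosen neighbourhood (so that $s$ is a legitimate single-valued localization), which again follows from the uniqueness clause of the contraction mapping theorem once the neighbourhoods are correctly nested. The subregular case is, by contrast, a one-line estimate together with the continuity remark.
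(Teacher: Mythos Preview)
Your argument is correct in both cases: the contraction mapping approach for strong regularity and the one-line estimate for subregularity are the standard proofs, and your neighbourhood bookkeeping is adequate. The paper itself does not give a proof at all but simply cites \cite[Theorems~8.6 and 12.2]{Dontchev2021}; your write-up is essentially a condensed version of the arguments found there, so there is no substantive difference in approach to report.
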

\begin{proof}
    See \cite[Theorem{s}~8.6 and 12.2]{Dontchev2021}.
\end{proof}

We next give an interpretation of strong regularity in the context of nonlinear optimization.

%\footnotetext{Without constraint function $g$.} % footnote in Table 1

\subsection{Nonlinear Optimization}
Consider {a} nonlinear program,
\begin{align}
    \label{eq:nonlinearopt}
    \min_{x \in C} h(x) \quad \text{subject to $g(x) = 0$}
\end{align}
with primal variable $x \in X$, cost function $h: X \to \mathbb R$, constraint $g: X \to Y$, and closed convex set $C \subset X$. {If} $\bar x$ is an optimal solution of \eqref{eq:nonlinearopt}, $\nabla h(\bar x)$ and $\nabla g(\bar x)$ exist, and a suitable constraint qualification such as {the} linear independence {constraint qualification} (LICQ) holds, then the KKT system
\begin{align}
    \label{eq:kkt}
    \begin{pmatrix}
        \nabla h(\bar x) + \nabla g(\bar x)^* \bar y \\
        g(\bar x)
    \end{pmatrix}
    + N_{C \times X^*}\big((\bar x, \bar y)\big) \ni 0
\end{align}
is satisfied for some dual variable $\bar y \in X^*$ \cite[see, e.g.,][Theorem~5.7]{Mordukhovich2006ii}. \replaced[id=TC]{Eq.~\eqref{eq:kkt} is a generalized equation in the form of \eqref{eq:generalized} with $z = (x,y) \in Z = X \times Y$, where the left-hand side}{The left-hand side of the generalized equation \eqref{eq:kkt}} {is} a set-valued map{ing} \added[id=TC]{due to the normal cone mapping}. {In finite dimensions, this mapping} is strongly regular at $(\bar x, \bar y)$ for $0$ if and only if LICQ holds (a fortiori, $\bar y$ is unique) and $\bar x$ is a strongly-stable stationary solution\footnote{See, e.g., \cite[Definition~2.7]{Grothey2001} for a definition \added[id=IK]{of a strongly-stable stationary solution}.} of \eqref{eq:nonlinearopt}%\deleted[id=TC]{; in particular, the KKT conditions are necessary and sufficient for optimality \cite{Grothey2001}}. 
Optimization algorithms which rely on solving \eqref{eq:kkt}, such as Newton-type methods, typical require strong regularity to guarantee that the result is in fact a {locally} optimal solution of \eqref{eq:nonlinearopt}.

In {Section~\ref{sec:applications}}, we will consider a {disturbed} version of \eqref{eq:nonlinearopt} where $h(\cdot,v)$ and $g(\cdot,v)$ are Lipschitz continuous functions of $v \in V$. %\deleted[id=TC]{We denote the optimal value function and optimal solution mapping by $v: P \to \mathbb R$ and $\Sigma: P \mto X$, respectively.} 
If the left-hand side of the {\em parametrized} KKT system \eqref{eq:kkt} is strongly regular at $\bar v \in V$, then its solution mapping \added[id=TC]{$S: V \mto X \times Y$} has a Lipschitz continuous single-valued localization; this is the result of Robinson's implicit function theorem \cite[Theorem~8.5]{Dontchev2021}, which we extend {to the case of multivariate mappings} in the appendix.

\subsection{Newton Methods}
To solve the generalized equation \eqref{eq:generalized} with $f: Z \to Z'$ and $F: Z \mto Z'$, we define the iteration
\begin{align}
    \label{eq:quasi-newton}
    z_{k+1} \in z_k - \big(H(z_k) + F\big)^{-1} f(z_k)
\end{align}
where, broadly speaking, $H(z_k): Z \to Z'$ {helps to} approximate $f$ around $z_k$. Table~\ref{tab:quasi-newton} reviews some common choices of $H(z_k)$ for the KKT system \eqref{eq:kkt} with $z = (x, y)$ and the resulting optimization algorithms. Eq.~\eqref{eq:quasi-newton} can be interpreted as {a} generalized equation parametrized in the previous solution $z_k$.
In general, the sequences generated by \eqref{eq:newton} or \eqref{eq:quasi-newton} are not unique, nor is a solution guaranteed to exist. Under regularity assumptions, however, a sequence exists and converges to a root of \eqref{eq:generalized}.

\begin{theorem}[{\cite[Theorem~15.2]{Dontchev2021}}]
    Let $\bar z$ be a solution to \eqref{eq:generalized} such that $\nabla f(\bar z)$ exists. {If} $f + F$ is strongly subregular at $\bar z$ for 0, then for any $z_0$ sufficiently close to $\bar z$ there exists a sequence $\{z_k\}_{k=0}^\infty$ generated by \eqref{eq:newton} which converges quadratically to $\bar z$. 
    Moreover, if $f + F$ is strongly regular, {then} this sequence is unique.
    $\lhd$
\end{theorem}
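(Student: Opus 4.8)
The plan is to recast the $(k+1)$-st Newton step as a small Lipschitz perturbation of the inclusion $f + F \ni 0$ and then combine the perturbation Proposition above with a first-order Taylor estimate; this is essentially the standard argument behind the cited result. Let $\kappa$ be the strong (sub)regularity constant of $f + F$ at $\bar z$ for $0$, with $V$ the associated localization neighbourhood, and --- using the standing assumption that a gradient, where it exists, is Lipschitz on a neighbourhood --- let $L$ be a Lipschitz constant of $\nabla f$ on a ball $B_\delta(\bar z) \subset V$; I will shrink $\delta$ a few times along the way. Given an iterate $z_k \in B_\delta(\bar z)$, rewrite the subproblem $0 \in f(z_k) + \nabla f(z_k)(z - z_k) + F(z)$ as $0 \in g_k(z) + f(z) + F(z)$ with $g_k(z) := f(z_k) + \nabla f(z_k)(z - z_k) - f(z)$. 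Since $\nabla g_k(z) = \nabla f(z_k) - \nabla f(z)$, the map $g_k$ is Lipschitz on $B_\delta(\bar z)$ with constant $\ell_k \le 2L\delta$, which is below $\kappa^{-1}$ once $\delta < (2L\kappa)^{-1}$; and the elementary bound $\| f(u) - f(v) - \nabla f(v)(u - v) \| \le \tfrac{L}{2}\| u - v \|^2$, applied at $u = \bar z$, $v = z_k$, gives $\| g_k(\bar z) \| \le \tfrac{L}{2}\| z_k - \bar z \|^2$.

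Applying the Proposition with $f + F$ as the regular map and $g_k$ as the Lipschitz perturbation, the Newton-subproblem map $\Phi_k := g_k + f + F$ is strongly (sub)regular at $\bar z$ for $g_k(\bar z)$ with a constant $\bar\kappa := \kappa/(1 - 2L\kappa\delta)$ that is uniform over $z_k \in B_\delta(\bar z)$. In the strongly regular case this yields a $\bar\kappa$-Lipschitz single-valued localization $s_k$ of $\Phi_k^{-1}$ around $(g_k(\bar z), \bar z)$; since $\| g_k(\bar z) \| \le \tfrac{L}{2}\delta^2$ is small, $0$ lies in its domain, so $z_{k+1} := s_k(0)$ is well defined, is the unique solution of the subproblem in $V$, and obeys $\| z_{k+1} - \bar z \| = \| s_k(0) - s_k(g_k(\bar z)) \| \le \bar\kappa \| g_k(\bar z) \| \le \tfrac{\bar\kappa L}{2}\| z_k - \bar z \|^2$. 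In the merely strongly subregular case, the isolated-calmness property of $\Phi_k^{-1}$ at $g_k(\bar z)$ for $\bar z$, evaluated at the residual $0$, gives the same estimate $\| z_{k+1} - \bar z \| \le \bar\kappa \| g_k(\bar z) \| \le \tfrac{\bar\kappa L}{2}\| z_k - \bar z \|^2$ for any solution $z_{k+1} \in V$ of the subproblem. Shrinking $\delta$ so that $\tfrac{\bar\kappa L}{2}\delta < 1$ makes the recursion self-sustaining ($z_{k+1} \in B_\delta(\bar z)$ again) and yields $\| z_k - \bar z \| \le (\tfrac{\bar\kappa L}{2})^{-1}(\tfrac{\bar\kappa L}{2}\| z_0 - \bar z \|)^{2^k} \to 0$, i.e.\ quadratic convergence; uniqueness of the entire sequence in the strongly regular case then follows inductively from uniqueness of each $s_k(0)$.

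The one step that is not an immediate consequence of the Proposition is the \emph{existence} of an iterate $z_{k+1} \in V$ in the merely strongly subregular case: strong subregularity of $\Phi_k$ supplies only a calmness bound on $\Phi_k^{-1}$, not a single-valued localization, so it does not on its own guarantee that the subproblem $\Phi_k \ni 0$ has any solution near $\bar z$. I would dispatch this separately --- for instance by reformulating the subproblem as a fixed-point problem and exploiting closedness of $\graph F$ (and, in the intended applications, the normal-cone structure of $F$, which turns the subproblem into a solvable variational inequality), or via a degree/continuity argument on $\Phi_k$. This is where I expect the real work to lie; the contraction estimate and the neighbourhood bookkeeping above are routine once the iterates are known to exist and to remain in $V$.
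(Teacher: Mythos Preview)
The paper does not give its own proof of this theorem: it is quoted verbatim from \cite[Theorem~15.2]{Dontchev2021} and closed with the $\lhd$ marker used elsewhere in the paper for results stated without proof. There is therefore nothing in the paper to compare your argument against directly; the closest the paper comes is the proof of Corollary~\ref{cor:pertubed-gradient}, which invokes exactly the Taylor remainder bound $\|f(z_k) - f(\bar z) - \nabla f(z_k)(z_k - \bar z)\| \le L\|z_k - \bar z\|^2$ that you derive, and cites \cite[Proof of Theorem~15.2]{Dontchev2021} for it. So your approach is the standard one underlying the cited result.

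On the substance: your reduction of the Newton subproblem to a Lipschitz perturbation $g_k + (f + F)$ and the ensuing contraction estimate are correct. One point you pass over as ``routine bookkeeping'' deserves a flag: the perturbation Proposition in the paper gives only the \emph{constant} $\kappa/(1-\kappa\mu)$ of the new localization, not the size of its domain neighbourhood, so your claim that $0$ lies in the domain of $s_k$ for every $k$ needs the sharper, quantitative version of the result (as in \cite[Theorem~5F.1]{Dontchev2021} or similar) that controls the neighbourhoods uniformly in the perturbation. This is indeed routine, but it is not literally supplied by the Proposition as stated. Your honest identification of the existence gap in the merely subregular case is exactly right: strong subregularity gives only the calmness bound on $\Phi_k^{-1}$, and the existence of a nearby iterate must be obtained separately --- in \cite{Dontchev2021} this is handled via the associated metric-regularity or Aubin-property machinery rather than a bare fixed-point argument.
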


In fact, any sequence that stays sufficiently close to $\bar z$ converges quadratically.
The following result provides sufficient conditions for convergence if $H(z_k)$ is not a derivative of $f$; we define $f_H(z, \zeta) = f(\zeta) + H(\zeta)(z - \zeta)$ and assume that $f_H(\cdot, \zeta)$ is Lipschitz continuous uniformly\footnote{We say that a function $f(x,y)$ is Lipschitz continuous with respect to $x$ {\em uniformly} in $y$ at $(\bar x, \bar y)$ if $f(\cdot, y)$ is Lipschitz continuous at $\bar x$ with some constant $\kappa < \infty$ for all $y$ {in a neighbourhood of} $\bar y$.} in $\zeta$ at $(\bar z, \bar z)$.

\begin{proposition}
    Let $\bar z$ be a solution to \eqref{eq:generalized} such that $f_H(x, \cdot)$ is Lipschitz continuous with constant $\gamma$ uniformly in $x$ at $(\bar z, \bar z)$; if $f_H(\cdot, \bar z) + F$ is strongly subregular at $\bar z$ for $0$ with constant $\kappa$ and $\kappa \gamma < 1$, then for any $z_0$ sufficiently close to $\bar z$ there exists a sequence generated by \eqref{eq:quasi-newton} which converges linearly to $\bar z$.
    Moreover, if $f_H(\cdot, \bar z) + F$ is strongly regular, the sequence is unique.
\end{proposition}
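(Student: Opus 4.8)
The plan is to recast the Josephy--Newton iteration \eqref{eq:quasi-newton} as a fixed-point/iteration-function argument built on the localized inverse of the strongly (sub)regular map $G := f_H(\cdot,\bar z) + F$. First I would fix the constant $\kappa$ of strong subregularity of $G$ at $\bar z$ for $0$, choose $\lambda\in(\kappa\gamma,1)$, and pick neighbourhoods $U,V$ of $\bar z$ on which both the isolated-calmness estimate $\|z-\bar z\|\le\kappa\,\|w\|$ for $w\in G(z)\cap V$ and the Lipschitz-in-$\zeta$ estimate $\|f_H(x,\zeta)-f_H(x,\bar z)\|\le\gamma\,\|\zeta-\bar z\|$ both hold, shrinking as needed so that the estimates chain correctly. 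The key algebraic observation is that $z_{k+1}$ satisfying \eqref{eq:quasi-newton}, i.e. $f(z_k)+H(z_k)(z_{k+1}-z_k)+F(z_{k+1})\ni 0$, is exactly the statement that $f_H(z_{k+1},z_k)+F(z_{k+1})\ni 0$; adding and subtracting $f_H(z_{k+1},\bar z)$ shows $z_{k+1}$ solves $G(z_{k+1})\ni f_H(z_{k+1},\bar z)-f_H(z_{k+1},z_k) =: w_k$, where $\|w_k\|\le\gamma\|z_k-\bar z\|$ by the uniform-in-$x$ Lipschitz hypothesis. Since $0\in G(\bar z)$, strong subregularity gives $\|z_{k+1}-\bar z\|\le\kappa\|w_k\|\le\kappa\gamma\|z_k-\bar z\|\le\lambda\|z_k-\bar z\|$, which is the desired linear (indeed $Q$-linear) rate.

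The remaining work is the existence half and the bookkeeping that keeps the iterates inside the neighbourhoods. For existence, I would apply Proposition (the Lipschitz-perturbation result quoted from \cite[Thm.~8.6, 12.2]{Dontchev2021}) with the perturbation $g(z):=f_H(z,z_k)-f_H(z,\bar z)$: this is Lipschitz in $z$ with a constant that can be made strictly less than $\kappa^{-1}$ by shrinking the neighbourhood of $z_k$ around $\bar z$ (this is where I would invoke the commented assumption that $\widehat{\lipschitz}_x(f_H(\cdot,z_k)-f_H(\cdot,\bar z))\to 0$ as $z_k\to\bar z$, or simply that $f_H$ is jointly Lipschitz so that this partial Lipschitz modulus vanishes at $(\bar z,\bar z)$). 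Then $G+g = f_H(\cdot,z_k)+F$ inherits strong subregularity at $\bar z$, and since it contains $0$ in its value at $\bar z$ and $f(z_k)\to 0$, a solution $z_{k+1}$ near $\bar z$ exists — more carefully, one uses that the perturbed map, being strongly subregular with a uniform constant, has a solution of the inclusion $\cdot\ni 0$ in $V$ for $z_k$ close enough; the contraction estimate above then confirms $z_{k+1}$ lands strictly closer to $\bar z$, so a simple induction keeps the whole sequence in $U\cap V$ and drives it to $\bar z$ linearly. For uniqueness under strong regularity, I would replace isolated calmness by the Lipschitz single-valued localization of $G^{-1}$: then $(f_H(\cdot,z_k)+F)^{-1}$ is itself single-valued near $\bar z$ by the same perturbation proposition, so $z_{k+1}$ is determined uniquely, and by induction the sequence is unique.

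The main obstacle I expect is the uniformity of the neighbourhoods: one must choose a single radius $r$ such that for every $z_k$ with $\|z_k-\bar z\|\le r$ the perturbed map $f_H(\cdot,z_k)+F$ is strongly subregular on $U$ with the \emph{same} constant (or a uniformly bounded one) and has its relevant solution inside $V$, and such that the resulting $z_{k+1}$ again satisfies $\|z_{k+1}-\bar z\|\le r$. This requires that the partial Lipschitz modulus of $f_H(\cdot,z_k)-f_H(\cdot,\bar z)$ be controlled uniformly for $z_k$ in a ball — precisely the point flagged in the author's \texttt{comment} — and a small-enough choice of $r$ so that $\kappa\cdot(\text{that modulus})<1$ uniformly; the contraction factor $\lambda<1$ then does the rest. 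Everything else (choosing $\lambda$, chaining the two estimates, the induction) is routine once these neighbourhoods are fixed.
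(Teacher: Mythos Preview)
Your proposal unpacks exactly what the paper's one-line proof does: the paper simply invokes \cite[Theorems~12.4 and 8.5]{Dontchev2021} with $h=f_H(\cdot,\bar z)$, and your core estimate---rewriting \eqref{eq:quasi-newton} as $G(z_{k+1})\ni w_k$ with $\|w_k\|\le\gamma\|z_k-\bar z\|$ and then using isolated calmness of $G^{-1}$ to get $\|z_{k+1}-\bar z\|\le\kappa\gamma\|z_k-\bar z\|$---is precisely the content of Theorem~12.4 applied to the solution mapping of $f_H(z,\zeta)+F(z)\ni 0$ in the parameter~$\zeta$. The neighbourhood bookkeeping and the uniqueness argument under strong regularity via the Lipschitz single-valued localization are likewise what Theorem~8.5 provides, so the route is the same.

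The one genuine gap is your existence argument under mere strong subregularity. Strong subregularity of $f_H(\cdot,z_k)+F$ at $\bar z$ for~$0$ says only that its inverse has the isolated calmness property; it does \emph{not} assert that the inverse is nonempty for values near~$0$, so you cannot conclude from it that a solution $z_{k+1}$ near $\bar z$ exists. Your supporting claim that $(f_H(\cdot,z_k)+F)(\bar z)\ni 0$ is also incorrect: $f_H(\bar z,z_k)+F(\bar z)$ contains $f_H(\bar z,z_k)-f(\bar z)$, which is generally nonzero when $z_k\neq\bar z$. Under subregularity, Theorem~12.4 only yields that \emph{any} iterate $z_{k+1}$ landing near $\bar z$ obeys the contraction bound; existence of such an iterate is not part of its conclusion and needs a separate argument (or the stronger regularity hypothesis). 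Under strong regularity your use of the perturbation proposition to obtain a single-valued localization of $(f_H(\cdot,z_k)+F)^{-1}$, hence both existence and uniqueness of $z_{k+1}$, is correct.
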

\begin{proof}
    This is a consequence of \cite[Theorems~12.4 and 8.5]{Dontchev2021} with $h = f_H(\cdot, \bar z)$.
\end{proof}

If $f$ is continuously differentiable at $\bar z$ and $H(z) = \nabla f(z)$, then $f_H(\cdot, \bar z)$ is the linearization of $f$ around $\bar z$ and $f_H(x, \cdot)$ is Lipschitz continuous uniformly in $x$ with constant $0$.

\subsection{Input-to-state Stability}
We {now consider} the robustness of the sequences generated by \eqref{eq:newton} or \eqref{eq:quasi-newton} under disturbances. To that extent, we consider {a} disturbed dynamic system
\begin{align}
    \label{eq:system}
    z_{k+1} = \Phi(z_k, v_k)
\end{align}
for all $k \in \mathbb N$, where $\mathbf v = (v_0, v_1, \ldots) \subset V$ is a sequence of disturbances with $\| \mathbf v \|_\infty \Def= \sup_{k \in \mathbb N} \| v_k \| < \infty$.
\added[id=TC]{The following definition makes use of the comparison function classes $\mathcal {KL}$ and $\mathcal K_\infty$ of monotonic functions; see \cite{Kellett2014} for details.}

\begin{definition}
    The system \eqref{eq:system} is {\em locally input-to-state stable} around $\bar z$ if and only if there exist $\epsilon, \delta > 0$ and functions $\beta \in \mathcal {KL}$ and $\gamma \in \mathcal K_\infty$ such that any sequence $\{z_k\}_{k=0}^\infty$ generated under disturbance $\| \mathbf v \|_\infty < \delta$ satisfies
    \begin{align*}
        \| z_k - \bar z \| \leq \beta(\| z_0 - \bar z \|, k) + \gamma(\| \mathbf v \|_\infty)
    \end{align*}
    for all $k \in \mathbb N$, provided that $\| z_0 - \bar z \| < \epsilon$.
\end{definition}

{The definition implies that the solution of \eqref{eq:system} converges to a ball $\mathcal B_{\gamma,\mathbf v}(\bar z)$ around $\bar z$ with radius given by \added[id=TC]{the gain} $\gamma(\| \mathbf v \|_\infty)$.}
The system \eqref{eq:system} is locally input-to-state stable around $\bar z$ if (and only {if}) there exists a continuous, positive definite function $V: Z \to \mathbb R_{\geq 0}$, constants $\epsilon > 0$ and $\delta > 0$, {and functions $\alpha, \gamma \in \mathcal K_\infty$ such that $\alpha < \operatorname{id}$ and \cite{Jiang2001}}
\begin{align}
    \label{eq:iss-newton}
    V(\Phi(z, v)) \leq \alpha V(z) + \gamma \| v \|
\end{align}
for all $(z, v) \in Z \times V$ with $\| z - \bar z \| < \epsilon$ and $\| v \| < \delta$.

\section{Main Results}
We prove local input-to-state stability of perturbed Newton method{s in the form of \eqref{eq:quasi-newton}, also referred to as} Josephy-Newton method, and propose a new multistep Newton-type method for multivariate generalized equations. A discussion of related results concludes this section.

\subsection{Josephy-Newton Method}
Our first result concerns the perturbed Josephy-Newton method,
\begin{align}
    \label{eq:perturbed-newton}
    f(z_k, v_k) + H(z_k,v_k) (z_{k+1} - z_k) + F(z_{k+1}) \ni 0
\end{align}
with $f: Z \times V \to Z'$ and $H(z,v): Z \to Z'$. As before, define $f_H(z,\zeta,v) = f(\zeta,v) + H(\zeta,v) (z - \zeta)$. \added[id=TC]{Here, the disturbance $v_k \in V$ might model, e.g., the inexact evaluation of the gradient $\nabla f(z)$ or a nonzero remainder in solving \eqref{eq:generalized}.} We make the following assumptions.

\begin{assumption}
    \label{ass:perturbed-newton}
    Let $\bar z \in Z$ and $\kappa, \gamma_z, \gamma_v > 0$ satisfy:
    \begin{ass-enumerate}
        \item $\bar z$ is a solution of $f(\cdot, 0) + F \ni 0$;
        \item $f_H(\cdot, \zeta, v)$ is Lipschitz continuous uniformly in $(\zeta, v)$ at $(\bar z, \bar z, 0)$;
        \item $f_H(z, \cdot, v)$ is Lipschitz continuous with constant $\gamma_z$ uniformly in $(z, v)$ at $(\bar z, \bar z, 0)$;
        \item $f_H(z, \zeta, \cdot)$ is Lipschitz continuous with constant $\gamma_v$ uniformly in $(z, \zeta)$ at $(\bar z, \bar z, 0)$;
        \item $f_H(\cdot, \bar z, 0) + F$ is strongly regular with constant $\kappa$ at $\bar z$ for 0;
    \end{ass-enumerate}
    and $\kappa \gamma_z < 1$.
\end{assumption}

Our result is based on an extension of Robinson's implicit function theorem for generalized equations with multiple parameters, given in the appendix.

\begin{theorem}
    \label{thm:perturbed-newton}
    Under Assumption~\ref{ass:perturbed-newton}, the iteration in \eqref{eq:perturbed-newton} is unique for $(z_k,v_k)$ sufficiently close to $(\bar z, 0)$ and satisfies
    \begin{align*}
        \|z_{k+1} - \bar z\| \leq \kappa\gamma_z \|z_k - \bar z\| + \gamma \|v_k\|
    \end{align*}
    % \gamma = \kappa \gamma_v
    that is, \eqref{eq:perturbed-newton} is locally input-to-state stable \added[id=TC]{around $\bar z$}.
\end{theorem}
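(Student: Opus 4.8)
The plan is to view the iteration \eqref{eq:perturbed-newton} as a generalized equation in the unknown $z_{k+1}$ parametrized by $(\zeta,v) = (z_k,v_k)$, namely $f_H(z_{k+1},z_k,v_k) + F(z_{k+1}) \ni 0$, whose solution at the nominal parameter $(\bar z,0)$ is $\bar z$, since $f_H(\bar z,\bar z,0) = f(\bar z,0)$ and Assumption~\ref{ass:perturbed-newton}(a) holds. First I would invoke the multi-parameter extension of Robinson's implicit function theorem established in the appendix: part~(e) provides strong regularity with constant $\kappa$ of the frozen map $\varphi + F$, where $\varphi := f_H(\cdot,\bar z,0)$, while parts~(b)--(d) together yield joint Lipschitz continuity of $f_H$ near $(\bar z,\bar z,0)$. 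This produces a single-valued Lipschitz localization $s$ of the solution mapping around $(\bar z,0)$ with $s(\bar z,0)=\bar z$; consequently, for $(z_k,v_k)$ sufficiently close to $(\bar z,0)$ the point $z_{k+1}$ defined by \eqref{eq:perturbed-newton} exists, is the unique such point near $\bar z$, satisfies $z_{k+1}=s(z_k,v_k)$, and lies close to $\bar z$. This settles the uniqueness claim.

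For the quantitative estimate I would work with the Lipschitz single-valued localization $\sigma$ of $(\varphi+F)^{-1}$ around $(0,\bar z)$, which by part~(e) has constant $\kappa$ and satisfies $\sigma(0) = \bar z$. Since $F(z_{k+1}) \ni -f_H(z_{k+1},z_k,v_k)$, the iterate also solves $\varphi(z_{k+1}) + F(z_{k+1}) \ni -r_k$ with $r_k := f_H(z_{k+1},z_k,v_k) - \varphi(z_{k+1})$; for small data $z_{k+1}$ is near $\bar z$ and $\|r_k\|$ is small, so $z_{k+1} = \sigma(-r_k)$ and $\|z_{k+1}-\bar z\| = \|\sigma(-r_k)-\sigma(0)\| \le \kappa\|r_k\|$. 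It then remains to split $r_k = \bigl(f_H(z_{k+1},z_k,v_k) - f_H(z_{k+1},\bar z,v_k)\bigr) + \bigl(f_H(z_{k+1},\bar z,v_k) - f_H(z_{k+1},\bar z,0)\bigr)$ and apply the uniform Lipschitz estimates of parts~(c) and (d), both of which hold uniformly in the first argument; this gives $\|r_k\| \le \gamma_z\|z_k-\bar z\| + \gamma_v\|v_k\|$, a bound that does not depend on $z_{k+1}$. Hence $\|z_{k+1}-\bar z\| \le \kappa\gamma_z\|z_k-\bar z\| + \kappa\gamma_v\|v_k\|$, which is the asserted inequality with $\gamma = \kappa\gamma_v$.

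Finally I would close an induction on $k$: choose $\epsilon>0$ small enough that the localizations $s$, $\sigma$ and all the uniform Lipschitz bounds are valid on $\mathcal B_\epsilon(\bar z)$ with parameters in a matching neighbourhood of $(\bar z,0)$, and then choose $\delta \in (0,\epsilon]$ with $\kappa\gamma_z\epsilon + \kappa\gamma_v\delta \le \epsilon$, which is possible precisely because $\kappa\gamma_z<1$. The per-step estimate then shows that $\|z_k-\bar z\| < \epsilon$ and $\|\mathbf v\|_\infty < \delta$ imply $\|z_{k+1}-\bar z\| < \epsilon$, so every iterate stays in $\mathcal B_\epsilon(\bar z)$ and the estimate holds for all $k$. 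Local ISS around $\bar z$ then follows either by unrolling the recursion --- with $\beta(r,k) = (\kappa\gamma_z)^k r \in \mathcal{KL}$ and asymptotic gain $s \mapsto \kappa\gamma_v(1-\kappa\gamma_z)^{-1}s \in \mathcal K_\infty$ --- or immediately from the Lyapunov characterization \eqref{eq:iss-newton} with $V = \|\cdot-\bar z\|$, $\alpha = \kappa\gamma_z\operatorname{id}$ and gain $\kappa\gamma_v\operatorname{id}$.

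I expect the main obstacle to be organizational rather than computational: verifying the hypotheses of the multi-parameter implicit function theorem and, above all, reconciling the various local neighbourhoods --- those on which (b)--(e) hold with uniform constants, the domain and range neighbourhoods of $\sigma$ and of $s$, and the invariant ball $\mathcal B_\epsilon(\bar z)$ --- into one consistent choice of $\epsilon$ and $\delta$ that the iteration preserves; the condition $\kappa\gamma_z<1$ is exactly what makes this reconciliation work. Once the neighbourhoods are fixed, the input-to-state estimate itself is the short localization argument above.
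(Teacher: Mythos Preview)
Your proposal is correct and follows essentially the same route as the paper: both cast \eqref{eq:perturbed-newton} as a parametrized generalized equation and invoke the multi-parameter implicit function theorem (Corollary~\ref{cor:generalized-multiple}) with $h = f_H(\cdot,\bar z,0)$ to obtain a Lipschitz single-valued localization $s$ with $s(\bar z,0)=\bar z$ and partial constants $\kappa\gamma_z$, $\kappa\gamma_v$. The paper simply quotes those constants from the corollary and stops, whereas you re-derive them by hand via the localization $\sigma$ of $(\varphi+F)^{-1}$ --- which is exactly what the corollary's proof does internally --- and you additionally spell out the invariance induction and the ISS conclusion that the paper leaves implicit.
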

\begin{proof}
    By virtue of Corollary~\ref{cor:generalized-multiple} with $h = f_H(\cdot,\bar z,0)$, the Josephy-Newton step \eqref{eq:perturbed-newton} has a locally unique solution $s: Z \times V \to Z$ for $(z_k,v_k)$ in the neighbourhood of $(\bar z, 0)$ satisfying
    \begin{align*}
        \| s(z,v) - s(z',v') \| \leq \kappa\gamma_z \|z - z'\| + \kappa\gamma_v \|v - v'\|
    \end{align*}
    for $(z,v), (z',v')$ around $(\bar z, 0)$. Taking $\alpha = \kappa\gamma_z < 1$ and $\gamma = \kappa\gamma_v$ as well as noting that $z_{k+1} = s(z_k,v_k)$ and $\bar z = s(\bar z,0)$ leads to the desired result.
\end{proof}

We can further strengthen this result if the disturbance affects the gradient of $f$, that is, $f(\cdot, v) = f$ and $H(\zeta,v) = \nabla f(\zeta) + v$, assuming continuous differentiability \added[id=IK]{of $f$}. Note that this implies Lipschitz continuity of $f_H$ with arbitrarily small constants $\gamma_z$ and $\gamma_v$; moreover, $f_H(\cdot,\bar z,0) + F$ is strongly regular if and only if $f + F$ is.
%\comment[id=TC]{Here, $V$ is the Banach (?) space of linear functions $Z \to Z'$.}
\added[id=TC]{In this case, \eqref{eq:perturbed-newton} resembles a {\em quasi-Newton} method. Specialising Theorem~\ref{thm:perturbed-newton} to the quasi-Newton method, we obtain quadratic convergence to $\mathcal B_{\gamma, \mathbf v}(\bar z)$ where the gain $\gamma$ vanishes close to $\bar z$.}

\begin{corollary}
    \label{cor:pertubed-gradient}
    Under Assumption~\ref{ass:perturbed-newton}, not only is %\eqref{eq:perturbed-newton}
    \begin{align}
        \label{eq:perturbed-gradient}
        f(z_k) + (\nabla f(z_k) + v_k)(z_{k+1} - z_k) + F(z_{k+1}) \ni 0
    \end{align}
    locally input-to-state stable {around $\bar z$} but the generated sequence \added[id=IK]{$\{z_k\}_{k = 0}^\infty$ satisfies}
    \begin{align}
        \label{eq:quadratic-convergence}
        \| z_{k+1} - \bar z \| \leq \kappa L \|z_k - \bar z\|^2 + \gamma_k \|v_k\|
    \end{align}
    \added[id=IK]{for all $k \in \mathbb N$}
    %converges quadratically {to $\mathcal B_{\gamma, \mathbf v}(\bar z)$} 
    and $\gamma_k \to 0$ as $z_k$ approaches~$\bar z$.
\end{corollary}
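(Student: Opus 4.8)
The plan is to derive the input-to-state stability statement directly from Theorem~\ref{thm:perturbed-newton} and then sharpen its linear estimate into the quadratic bound \eqref{eq:quadratic-convergence}. In the quasi-Newton setting $f(\cdot,v)=f$, $H(\zeta,v)=\nabla f(\zeta)+v$ one has $f_H(\cdot,\bar z,0)=f(\bar z)+\nabla f(\bar z)(\cdot-\bar z)$, so that $G \Def= f_H(\cdot,\bar z,0)+F$ is the linearisation of $f+F$ at $\bar z$, is strongly regular at $\bar z$ for $0$ with constant $\kappa$ by Assumption~\ref{ass:perturbed-newton}(e), and satisfies $0\in G(\bar z)$. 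Since continuous differentiability of $f$ with $L$-Lipschitz gradient $\nabla f$ renders the moduli $\gamma_z,\gamma_v$ of Assumption~\ref{ass:perturbed-newton} arbitrarily small on a sufficiently small neighbourhood, Theorem~\ref{thm:perturbed-newton} applies and already gives local ISS around $\bar z$ together with a locally unique Newton step $z_{k+1}=s(z_k,v_k)$ which, with $\bar z=s(\bar z,0)$, keeps the whole trajectory inside the domain of the single-valued localisation of $G^{-1}$ whenever $\|z_0-\bar z\|$ and $\|\mathbf v\|_\infty$ are small enough.

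For the quadratic refinement I would re-express \eqref{eq:perturbed-gradient} as a perturbed instance of $G$: adding and subtracting $f(\bar z)+\nabla f(\bar z)(z_{k+1}-\bar z)$ shows that $d_k\in G(z_{k+1})$, where the linearisation defect is
\begin{align*}
    d_k = \big[f(\bar z)-f(z_k)-\nabla f(z_k)(\bar z-z_k)\big]+\big[\nabla f(\bar z)-\nabla f(z_k)\big](z_{k+1}-\bar z)-v_k(z_{k+1}-z_k).
\end{align*}
Because $z_{k+1}$ lies in the localisation neighbourhood and $\bar z\in G^{-1}(0)$, strong regularity of $G$ yields $\|z_{k+1}-\bar z\|\le\kappa\|d_k\|$. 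Estimating the first bracket by the integral form of Taylor's theorem and the remaining two by the $L$-Lipschitz continuity of $\nabla f$ gives
\begin{align*}
    \|d_k\|\le\tfrac{L}{2}\|z_k-\bar z\|^2+L\,\|z_k-\bar z\|\,\|z_{k+1}-\bar z\|+\|v_k\|\big(\|z_k-\bar z\|+\|z_{k+1}-\bar z\|\big).
\end{align*}

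It then remains to move the terms containing $\|z_{k+1}-\bar z\|$ to the left. Shrinking the neighbourhood of $\bar z$ and the bound on $\|\mathbf v\|_\infty$ so that $\kappa L\|z_k-\bar z\|+\kappa\|v_k\|\le\tfrac12$ for every $k$ — legitimate precisely because the ISS estimate of Theorem~\ref{thm:perturbed-newton} confines the whole trajectory to such a neighbourhood a priori — and solving for $\|z_{k+1}-\bar z\|$ produces \eqref{eq:quadratic-convergence} with leading coefficient bounded by $\kappa L$ and
\begin{align*}
    \gamma_k=\frac{\kappa\,\|z_k-\bar z\|}{1-\kappa L\|z_k-\bar z\|-\kappa\|v_k\|}\le 2\kappa\|z_k-\bar z\|,
\end{align*}
so $\gamma_k\to 0$ as $z_k\to\bar z$. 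I expect the only genuine obstacle to be the neighbourhood bookkeeping: one must establish, before the quadratic bound is available, that every iterate stays in the region where the single-valued localisation of $G^{-1}$ is $\kappa$-Lipschitz and where the absorption inequality holds, so that the argument is not circular; granted that, the remaining steps are routine triangle-inequality and Taylor-remainder manipulations.
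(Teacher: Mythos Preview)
Your argument is correct, but it takes a somewhat different route than the paper. The key distinction is \emph{which} linearised map you invoke strong regularity for. You fix $G=f_H(\cdot,\bar z,0)+F$, compute the defect $d_k\in G(z_{k+1})$, and then must absorb the terms $\|z_{k+1}-\bar z\|$ appearing on the right, which forces the extra bookkeeping and yields $\gamma_k=\kappa\|z_k-\bar z\|/(1-\kappa L\|z_k-\bar z\|-\kappa\|v_k\|)$. The paper instead works with the \emph{moving} linearisation $G_k=f_H(\cdot,z_k,v_k)+F$: since $0\in G_k(z_{k+1})$ and $-f(\bar z)\in F(\bar z)$, the residual at the \emph{known} point $\bar z$ is exactly
\[
e_k \;=\; f(z_k)-f(\bar z)-(\nabla f(z_k)+v_k)(z_k-\bar z),
\]
which involves only $z_k$ and $v_k$. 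Strong regularity of $G_k$ (inherited from $G$ by perturbation, with constant still written $\kappa$) then gives $\|z_{k+1}-\bar z\|\le\kappa\|e_k\|\le\kappa L\|z_k-\bar z\|^2+\kappa\|z_k-\bar z\|\,\|v_k\|$ directly, so $\gamma_k=\kappa\|z_k-\bar z\|$ with no absorption step. Your approach has the advantage that strong regularity is invoked only once for the fixed map $G$, at the price of a slightly messier constant and the neighbourhood-shrinking argument you flag; the paper's approach is cleaner but tacitly relies on the stability of the regularity constant under the perturbation $(z_k,v_k)\to(\bar z,0)$. Both are sound.
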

\begin{proof}
    Let $\{z_k\}_{k = 0}^\infty$ be the sequence generated by \eqref{eq:perturbed-gradient} \added[id=TC]{which, by virtue of Theorem~\ref{thm:perturbed-newton}, exists}, is unique, and remains in the neighbourhood of $\bar z$ for $z_0$ and $\mathbf v$ close to $(\bar z,0)$. We \added[id=IK]{now argue} with \cite[Proof of Theorem~15.2]{Dontchev2021} that
    \begin{multline*}
        \| f(z_k) - f(\bar z) - (\nabla f(z_k) + v_k)(z_k - \bar z) \| \\ \leq L \|z_k - \bar z\|^2 + |\langle v_k, z_k - \bar z \rangle|
    \end{multline*}
    where $2L$ is the Lipschitz constant of $\nabla f(z)$ around $\bar z$, \added[id=IK]{guaranteed to exist by Assumption~\ref{ass:perturbed-newton} and $H(z,v) = \nabla f(z) + v$,} and hence \added[id=IK]{\eqref{eq:quadratic-convergence} holds}
    with $\gamma_k = \kappa \|z_k - \bar z\|$ by strong regularity of $f_H$.
\end{proof}

\subsection{Multistep Newton Method}
For our second result, we consider the perturbed multivariate generalized equation
\begin{align}
    \label{eq:multivariate}
    f(x,y,v) + F(x,y,v) \ni 0
\end{align}
with $f: X \times Y \times V \to Z'$ and $F: X \times Y \mto Z'$. We propose to solve \eqref{eq:multivariate} by the multistep Newton{-type} method
\begin{subequations}
    \label{eq:multivariate-newton}
\begin{align}
    \label{eq:multivariate-newton-inner}
    \tilde f(x_{k+1},y_k,v_k) + \tilde F(x_{k+1}\deleted[id=TC]{,y_k}) \ni 0 \\
    \label{eq:multivariate-newton-outer}
    f_{Hy}(x_{k+1},y_{k+1},y_k,v_k) + F(x_{k+1},y_{k+1}) \ni 0
\end{align}
\end{subequations}
where, \added[id=TC]{in the first step,} $\tilde f: X \times Y \times V \to Z''$ and $\tilde F: X \mto Z''$~pro\-vide a (possibly {lower-order}) generalized equation for $x$ parametrized in $y$; \added[id=TC]{in the second step,} $f_{Hy}(\xi,y,\eta,v) = f_H(\xi,\xi,y,\eta,v)$ with \[f_H: (x,\xi,y,\eta,v) \mapsto f(\xi,\eta,v) + H(\xi,\eta,v)(x - \xi, y - \eta)\] and operator $H(\xi,\eta,v): X \times Y \to Z'$ is a perturbed approximation of $f(\xi,y,0)$ with respect to $y$ around $\eta$. 
The \deleted[id=TC]{first} {inclusion} \added[id=IK]{\eqref{eq:multivariate-newton-inner}} could be solved inexactly, e.g., through a finite number of Newton-type steps, with error reflected by the disturbance. {The multistep Newton-type method is a useful framework to study bilevel optimization problems, where \eqref{eq:multivariate-newton-inner} corresponds to the KKT system of a lower-level parametrized optimization problem. We will demonstrate this on the example of the augmented Lagrangian method which can be interpreted as solving the dual problem of \eqref{eq:nonlinearopt}, \added[id=IK]{which leads to} a bilevel optimization \cite{Bertsekas1982a}.}

\begin{assumption}
    \label{ass:multivariate-newton}
    Let $\bar z = (\bar x, \bar y) \in X \times Y$ and $\tilde\kappa, \kappa, \gamma_y, \gamma_v, \allowbreak \gamma_w > 0$ satisfy:
    \begin{ass-enumerate}
        \item $(\bar x, \bar y)$ is a solution of \eqref{eq:multivariate};
        \item $\tilde f(\cdot, \bar y, 0) + \tilde F$ is strongly subregular with constant $\tilde\kappa$ at $\bar x$ for $0$; %\comment[id=TC]{check parametrization of $F$}
        \item $\tilde f(\cdot, y, v)$ is Lipschitz continuous uniformly in $(y,v)$ at $(\bar x, \bar y, 0)$;
        \item $\tilde f(x, \cdot, \cdot)$ is Lipschitz continuous with constant $\gamma_w$ uniformly in $x$ at $(\bar x, \bar y, 0)$;
        \item $f_H(\cdot, \xi, \cdot, \eta, v)$ is Lipschitz continuous uniformly in $(\xi,\eta,v)$ at $(\bar x, \bar x, \bar y, \bar y, 0)$;
        \item $f_H(x, \cdot, y, \cdot, v)$ is Lipschitz continuous with constant $\gamma_y$ uniformly in $(x,y,v)$ at $(\bar x, \bar x, \bar y, \bar y, 0)$;
        \item $f_H(x, \xi, y, \eta, \cdot)$ is Lipschitz continuous with constant $\gamma_v$ uniformly in $(x,\xi,y,\eta)$ at $(\bar x, \bar x, \bar y, \bar y, 0)$;
        \item $f_H(\cdot,\bar x,\cdot,\bar y,0) + F$ is strongly regular with constant $\kappa$ at $(\bar x, \bar y)$ for $0$; %\comment[id=TC]{check $f_H$}
    \end{ass-enumerate}
    and $\kappa \gamma_y < 1$.
\end{assumption}

% Some remarks are due:
% \begin{enumerate}
%     \item 
\begin{remark}
    {The mapping} $f_H(\cdot,\bar x,\cdot,\bar y,0) + F$ is strongly regular if $f(\cdot,\cdot,0) + F$ is strongly regular with constant $\tilde\kappa$ and $f_H(\cdot,\xi,\cdot,\eta,v) - f(\cdot,\cdot,0)$ is Lipschitz continuous with constant smaller than $\tilde\kappa^{-1}$ uniformly in $(\xi,\eta,v)$ \cite[Theorem~8.6]{Dontchev2021}.
\end{remark}
    % \item 
    
\begin{remark}
    \label{rem:multivariate-solution}
    An immediate consequence of Assumption~\ref{ass:multivariate-newton} is that the solution map{ping} $S: Y \times V \mto X$ of \eqref{eq:multivariate-newton-inner} has the isolated calmness property with constant $\tilde\kappa \gamma_w$ at $(\bar y,0)$ for $\bar x$ by virtue of \cite[Theorem~12.4]{Dontchev2021}.
\end{remark}
% \end{enumerate}
% Moreover, 
\begin{remark}
    If $F$ is a piecewise polyhedral mapping, then strong subregularity is equivalent to $\bar x$ being an isolated point in $S(\bar y, 0)$, a consequence of outer Lipschitz continuity of piecewise polyhedral mappings \cite[Theorem~12.5]{Dontchev2021}, which is again equivalent to a unique solution of \eqref{eq:multivariate}. 
\end{remark}

For the following result, we impose the norm on $X \times Y$ as $\|(x,y)\| \Def= \|x\| + \|y\|$.

\begin{theorem}
    \label{thm:multivariate-newton}
    Under Assumption~\ref{ass:multivariate-newton}, there exists a sequence $\{(x_k,y_k)\}_{k = 0}^\infty$ generated by \eqref{eq:multivariate-newton} for $(x_0,y_0)$ and $\mathbf v$ sufficiently close to $(\bar x, \bar y, 0)$ such that $\{y_k\}$ is unique and \eqref{eq:multivariate-newton-outer} is locally input-to-state stable with gain $\kappa \gamma_v$.
\end{theorem}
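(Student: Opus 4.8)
The plan is to treat the two substeps of \eqref{eq:multivariate-newton} separately and then compose the resulting Lipschitz estimates. For the inner step \eqref{eq:multivariate-newton-inner}, I would use Assumption~\ref{ass:multivariate-newton}(a)--(d) together with Remark~\ref{rem:multivariate-solution}: the solution mapping $S : Y \times V \mto X$ of \eqref{eq:multivariate-newton-inner} has the isolated-calmness property at $(\bar y,0)$ for $\bar x$ with constant $\tilde\kappa\gamma_w$, so on a suitable neighbourhood every selection $x_{k+1} \in S(y_k,v_k)$ that stays near $\bar x$ satisfies $\|x_{k+1}-\bar x\| \le \tilde\kappa\gamma_w(\|y_k-\bar y\| + \|v_k\|)$. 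Since this selection need not be unique --- and \eqref{eq:multivariate-newton-inner} may even be solved inexactly, as discussed after \eqref{eq:multivariate-newton} --- the theorem only asserts existence of a generated sequence; the convention would be that we always pick such a nearby $x_{k+1}$.

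Next I would recast \eqref{eq:multivariate-newton-outer} as a parametrized generalized equation. Because $f_{Hy}(\xi,y,\eta,v)=f_H(\xi,\xi,y,\eta,v)$, the increment in the first ($x$-)slot of $H$ vanishes, so \eqref{eq:multivariate-newton-outer} reads
\[
    f(x_{k+1},y_k,v_k) + H(x_{k+1},y_k,v_k)(0,\,y_{k+1}-y_k) + F(x_{k+1},y_{k+1}) \ni 0 .
\]
I would view this as a generalized equation in the pair $(x',y_{k+1})\in X\times Y$ --- with $x'$ an auxiliary copy of the $x$-variable on which $F$ is evaluated --- parametrized by $(x_{k+1},y_k,v_k)$; at the reference parameters it coincides with $f_H(\cdot,\bar x,\cdot,\bar y,0)+F$ evaluated at $(x',y_{k+1})$, which by Assumption~\ref{ass:multivariate-newton}(h) is strongly regular at $(\bar x,\bar y)$ for $0$ with constant $\kappa$. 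Using (e) to identify this base mapping, continuity of $H$ to keep strong regularity under the locally small perturbation incurred by moving the parameters off the reference, and the multi-parameter version of Robinson's implicit function theorem (Corollary~\ref{cor:generalized-multiple}) with Lipschitz data from (f) and (g), I would obtain a locally unique solution $(x',y_{k+1})=(\sigma_x,\sigma_y)(x_{k+1},y_k,v_k)$ that is Lipschitz in its parameters, with constant $\kappa\gamma_y$ in $(x_{k+1},y_k)$ and $\kappa\gamma_v$ in $v_k$. Plugging $x'=x_{k+1}$ back in recovers exactly \eqref{eq:multivariate-newton-outer}, so by local uniqueness $\sigma_x(x_{k+1},y_k,v_k)=x_{k+1}$; hence $y_{k+1}=\sigma_y(x_{k+1},y_k,v_k)$ is uniquely determined once $x_{k+1}$ is fixed, which gives uniqueness of $\{y_k\}$ for given $y_0$ and $\mathbf v$.

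For the ISS claim I would compose the two estimates: from Lipschitz continuity of $\sigma_y$ and $\bar y=\sigma_y(\bar x,\bar y,0)$,
\[
    \|y_{k+1}-\bar y\| \le \kappa\gamma_y\big(\|x_{k+1}-\bar x\| + \|y_k-\bar y\|\big) + \kappa\gamma_v\|v_k\| ,
\]
and substituting the inner-step bound folds the $x_{k+1}$-term into contributions proportional to $\|y_k-\bar y\|$ and $\|v_k\|$. After shrinking the neighbourhood so that the total coefficient of $\|y_k-\bar y\|$ is strictly below $1$ --- this is where $\kappa\gamma_y<1$ enters --- while the coefficient of $\|v_k\|$ remains $\kappa\gamma_v$, one arrives at $\|y_{k+1}-\bar y\| \le \alpha_0\|y_k-\bar y\| + \kappa\gamma_v\|v_k\|$ with $\alpha_0<1$. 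A routine induction keeps the iterates in the neighbourhood, and this final estimate is exactly the dissipation characterization \eqref{eq:iss-newton} with $V(\cdot)=\|\cdot-\bar y\|$, linear $\alpha<\operatorname{id}$, and linear gain $\kappa\gamma_v$, i.e.\ local input-to-state stability of \eqref{eq:multivariate-newton-outer} with gain $\kappa\gamma_v$.

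The hard part will be twofold. First, one must justify that fixing $x'=x_{k+1}$ in the outer generalized equation is legitimate --- that the joint localization is consistent with the $x_{k+1}$ delivered by the inner step ($\sigma_x(x_{k+1},y_k,v_k)=x_{k+1}$) and that $y_{k+1}$ actually lies in the localization neighbourhood. Second, and more delicate, is the bookkeeping in the composition: one must bundle the inner-step contribution routed through $x_{k+1}$ so that the $y_k$-coefficient stays below $1$ under the sole hypothesis $\kappa\gamma_y<1$ and so that the input gain comes out as $\kappa\gamma_v$ rather than a larger constant. This is precisely where the structural feature $f_{Hy}=f_H(\xi,\xi,\cdot)$ (vanishing $x$-increment) and the exact form of the Lipschitz assumptions (e)--(g) have to be exploited.
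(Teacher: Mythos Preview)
Your overall plan matches the paper's: both recognize that \eqref{eq:multivariate-newton-outer} is a Josephy-Newton step with ``previous state'' $(x_{k+1},y_k)$ and ``next state'' $(x_{k+1},y_{k+1})$, and both use isolated calmness of the inner-step solution map for $x_{k+1}$. (The paper simply invokes Theorem~\ref{thm:perturbed-newton} instead of reproving it via Corollary~\ref{cor:generalized-multiple}, and identifies $\sigma_x=x_{k+1}$ implicitly by setting $z_k=(x_{k+1},y_k)$, $z_{k+1}=(x_{k+1},y_{k+1})$.)

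There is, however, a genuine gap in how you extract the $y$-estimate. From the Lipschitz localization you correctly have, with $\|(x,y)\|=\|x\|+\|y\|$,
\[
    \|x_{k+1}-\bar x\| + \|y_{k+1}-\bar y\|
    \;\le\; \kappa\gamma_y\bigl(\|x_{k+1}-\bar x\| + \|y_k-\bar y\|\bigr) + \kappa\gamma_v\|v_k\| ,
\]
but you then drop the term $\|x_{k+1}-\bar x\|$ on the left and try to absorb the remaining $\kappa\gamma_y\|x_{k+1}-\bar x\|$ on the right via the inner-step bound $\|x_{k+1}-\bar x\|\le\tilde\kappa\gamma_w(\|y_k-\bar y\|+\|v_k\|)$. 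That yields a $y_k$-coefficient $\kappa\gamma_y(1+\tilde\kappa\gamma_w)$ and an input coefficient $\kappa\gamma_v+\kappa\gamma_y\tilde\kappa\gamma_w$, neither of which can be brought down to $\kappa\gamma_y$ and $\kappa\gamma_v$ by ``shrinking the neighbourhood'': $\tilde\kappa$ and $\gamma_w$ are fixed constants from Assumption~\ref{ass:multivariate-newton}. You anticipate this in your last paragraph but do not resolve it.

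The missing idea is that the \emph{same} $x_{k+1}$ appears on both sides, so instead of substituting you should cancel: moving $\kappa\gamma_y\|x_{k+1}-\bar x\|$ to the left leaves $(1-\kappa\gamma_y)\|x_{k+1}-\bar x\|\ge 0$, which can be discarded, giving
\[
    \|y_{k+1}-\bar y\| \;\le\; \kappa\gamma_y\|y_k-\bar y\| + \kappa\gamma_v\|v_k\| .
\]
This is exactly where the sum norm on $X\times Y$ and the hypothesis $\kappa\gamma_y<1$ enter, and it delivers both the contraction rate $\kappa\gamma_y$ and the exact gain $\kappa\gamma_v$ without any $\tilde\kappa\gamma_w$ contamination. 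The inner-step bound is used only afterwards, to control $\|x_k-\bar x\|$ for the full $(x_k,y_k)$-ISS statement.
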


\begin{proof}
    Take $(y_k,v_k) \in Y \times V$ close to $(\bar y, 0)$; by strong subregularity (Assumption~\ref{ass:multivariate-newton}-b), there exists a solution $x_{k+1}$ of \eqref{eq:multivariate-newton-inner} close to $\bar x$. Let $y_{k+1}$ solve \eqref{eq:multivariate-newton-outer} and observe that
    \begin{multline*}
        0 \in
        f(x_{k+1},y_k,v_k) + F(x_{k+1},y_{k+1}) \\ 
        + {H(x_{k+1},y_k,v_k)(x_{k+1} - x_{k+1}, y_{k+1} - y_k)}
    \end{multline*}
    in other words, $(x_{k+1},y_{k+1})$ is a Josephy-Newton step in the sense of \eqref{eq:perturbed-newton} for \eqref{eq:multivariate} with $z_k = (x_{k+1},y_k)$. By virtue of Theorem~\ref{thm:perturbed-newton}, the point $y_{k+1}$ is unique and satisfies
    \begin{align*}
        %\|y_{k+1}\| &= 
        \|(x_{k+1},y_{k+1}) - \bar z\| %- \|x_{k+1}\| \\
        &\leq \kappa \gamma_y \|(x_{k+1},y_k) - \bar z\| + \kappa \gamma_v \|v_k\| %- \|x_{k+1}\| \\
        %&\leq \kappa \gamma_y \|y_k\| + \kappa \gamma_v \|v_k\|
    \end{align*}
    hence, $\|y_{k+1} - \bar y\| \leq \kappa \gamma_y \|y_k - \bar y\| + \kappa \gamma_v \|v_k\|$
    by choice of $\|\cdot\|$ on $X \times Y$ and $\kappa \gamma_y < 1$ by Assumption~\ref{ass:multivariate-newton}.
    Moreover, the solution map $S$ of \eqref{eq:multivariate-newton-inner} has the isolated calmness property (Remark~\ref{rem:multivariate-solution}) and thus, {$\|x_{k+1} - \bar x\| \leq \tilde\kappa \gamma_w \|y_k - \bar y\| + \tilde\kappa \gamma_w \|v_k\|$}. %{for some $\lambda > 0$}.
    Combining these results we obtain
    \begin{align*}
        \|(x_k,y_k) - (\bar x,\bar y)\| \leq \alpha_k \|(x_0,y_0)\| + \gamma_\infty \|\mathbf v\|_\infty
    \end{align*}
    with $\alpha_k \Def= (\kappa \gamma_y)^{k-1}(\kappa \gamma_y + {\tilde\kappa \gamma_w})$,
    the desired result.
\end{proof}

\begin{remark}
    \label{rem:multivariate-operator}
It should be noted that the partial operator $H_x$ is never used but in the theoretical analysis and hence can freely be chosen %\footnote{This includes $H_x(\xi,\eta,v): d \mapsto f(\xi+d,\eta,v) - f(\xi,\eta,v)$.} 
to satisfy the strong regularity condition in Assumption~\ref{ass:multivariate-newton}. 
In particular, a possible choice for $H$ is
\begin{multline*}
    H(\xi,\eta,v): (d_x,d_y) \mapsto f(\xi+d_x,\eta,v) \\ + H_y(\xi+d_x,\eta,v)d_y - f(\xi,\eta,v)
\end{multline*}
with $H_y(\xi,\eta,v): Y \to Z'$, that is,
\begin{align*}
    f_H(x,\xi,y,\eta,v) \equiv f(x,\eta,v) + H_y(x,\eta,v)(y - \eta)
\end{align*}
and regularity and continuity of $f_H$ depend on $f_{Hy}$ only.
\end{remark}

We present applications of these results in nonlinear optimization in the next section.

\subsection{Related Results}
Previous works studied a Newton-type iteration of the form of \eqref{eq:perturbed-newton} with $H(z,p) = \nabla_x f(z,p)$ to solve parametrized generalized equations, assuming Fréchet differentiability of $f$ with respect to $z$ and continuity of $f$ and $\nabla_x f$. Under strong regularity assumptions similar to Assumption~\ref{ass:perturbed-newton}, the authors \added[id=IK]{of} \cite{Dokov1998} concluded that the sequence $\{z_k\}_{k=0}^\infty$ is locally unique and convergent to a solution $z(p)$ for any constant $p$ sufficiently close to $0$, and $\| z(p) - z(0) \| \leq \mu \|p\|$ for some constant $\mu > 0$. Furthermore, in \cite{Dontchev2010}, it was proven that the sequence satisfies
\begin{align*}
    \sup_{k \in \mathbb N_+} \|z_k - \bar z\| \leq \alpha \|z_0 - \bar z\| + \gamma \| p \|
\end{align*}
for some $\alpha < 1$ and $\gamma > 0$; this result is both necessary and sufficient for local input-to-state stability {in the sense of \eqref{eq:iss-newton}}.

Another classical topic in the study of Newton-type methods is the convergence of the iteration \eqref{eq:newton} or \eqref{eq:quasi-newton} if the right-hand side is a nonzero {remainder}, viz.
\begin{align*}
    f(z_k) + H(z_k)(z_{k+1} - z_k) + F(z_k) \ni e_k
\end{align*}
\replaced[id=IK]{typically corresponding to solving inexactly the underlying}{often studied in the context of solving inexact} linear equations (see, e.g., \cite{Dembo1982}). Using local input-to-state stability \added[id=IK]{properties}, we \added[id=IK]{can immediately} retrieve the desired convergence of $\{z_k\}_{k=0}^\infty$ to $\bar z$ if $\|e_k\| \to 0$.

\section{Applications}
\label{sec:applications}
We apply the results of Theorems~\ref{thm:perturbed-newton} and \ref{thm:multivariate-newton} to derive new robust convergence properties for nonlinear optimization algorithms. {We assume that $f$ and $g$ in \eqref{eq:nonlinearopt} are continuously differentiable in $x$ at $(\bar x,0)$ uniformly in $v$ and Lipschitz continuous in $v$ uniformly in $x$.}

\subsection{Approximate Sequential Quadratic Programming}
A classical approach to sequential quadratic programming is the approximation of the Hessian of the Lagrangian $L(x,y) \Def= h(x) + \langle g(x), y \rangle$ for \eqref{eq:nonlinearopt}, which appears in the upper-left block of the gradient when computing the (exact) Newton step for \eqref{eq:kkt}. Approximating the Hessian by a positive definite matrix $B_{k+1}$ at step $k \in \mathbb N$, the Newton step then becomes equivalent to solving the quadratic program \cite[Theorem~11.1]{Dontchev2021}
\begin{subequations}
    \label{eq:approx-sqp}
\begin{align}
    &\!\min_{x \in C} \frac{1}{2} \langle B_{k+1} (x - x_k), x - x_k \rangle + \nabla h(x_k)(x - x_k) \\
    &\text{subject to $g(x_k) + \nabla g(x_k)(x - x_k) = 0$} 
\end{align}
\end{subequations}
and taking $\added[id=IK]{z_{k+1} =} (x,y)_{k+1}$ as (unique) primal-dual solution of \eqref{eq:approx-sqp}.
Popular algorithms to compute the approximation $B_{k+1}$ along the solution $\{(x,y)_k\}_{k \in \mathbb N}$ include the BFGS and DFP methods (named, respectively, for its discoverers), which belong to the larger Broyden class of Hessian update formulas and often provide superlinear convergence of the quasi-Newton iteration \cite{Nocedal2006}. 

\begin{assumption}
    \label{ass:approx-sqp}
    Eq.~\eqref{eq:nonlinearopt} has an optimal solution $(\bar x, \bar y) \in X \times Y$ such that \eqref{eq:kkt} is strongly regular at $(\bar x, \bar y)$ for $0$; the update $B_{k+1} = \Psi(B_k,z_{k+1})$ is locally input-to-state stable around $\nabla^2 L(\bar x, \bar y)$.
\end{assumption}

{Hessian approximations such as BFGS and DFP often require additional conditions to ensure that $B_k \to \nabla^2 L(\bar x,\bar y)$. Here, however, we neglect} the intricacies of the approximation {and instead focus} on the interplay between quasi-Newton step and Hessian update.

\begin{proposition}
     Under Assumption~\ref{ass:approx-sqp}, the quasi-Newton step of \eqref{eq:approx-sqp} with Hessian update $B_{k+1} = \Psi(B_k,z_{k+1})$ is locally asymptotically stable. %\deleted[id=TC]{and converges superlinearly}.
\end{proposition}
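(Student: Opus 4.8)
The plan is to treat the closed loop as a feedback interconnection, on the extended state $(z_k,B_k)$, of two locally input-to-state stable subsystems: the quasi-Newton step in $z$, and the Hessian recursion $B_{k+1}=\Psi(B_k,z_{k+1})$. The candidate equilibrium is $(\bar z,\bar B)$ with $\bar z=(\bar x,\bar y)$ the KKT point of \eqref{eq:kkt} and $\bar B=\nabla^2 L(\bar x,\bar y)$; note that $\Psi(\bar B,\bar z)=\bar B$ is forced by Assumption~\ref{ass:approx-sqp}, since $\bar B$ is the point around which $\Psi$ is assumed ISS. Writing out the KKT conditions of \eqref{eq:approx-sqp} and comparing with the Newton step \eqref{eq:newton} for \eqref{eq:kkt} shows that $z_{k+1}$ is exactly a perturbed quasi-Newton step \eqref{eq:perturbed-gradient} with $H(z_k)=\nabla f(z_k)+v_k$, where $v_k$ is the operator carrying the Hessian mismatch $B_{k+1}-\nabla^2 L(x_k,y_k)$ in its upper-left block and $0$ elsewhere. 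Because \eqref{eq:kkt} is strongly regular, Assumption~\ref{ass:perturbed-newton} is satisfied with $\gamma_z,\gamma_v$ arbitrarily small, so Corollary~\ref{cor:pertubed-gradient} applies: $z_{k+1}$ is locally unique and
\[
  \|z_{k+1}-\bar z\| \;\le\; \kappa L\,\|z_k-\bar z\|^2 \;+\; \kappa\,\|z_k-\bar z\|\,\|v_k\|,
\]
with $\|v_k\|\le\|B_{k+1}-\bar B\|+2L\|z_k-\bar z\|$ by Lipschitz continuity of the Hessian. The crucial structural fact is that the gain from the ``input'' $B_{k+1}-\bar B$ into $z_{k+1}-\bar z$ is $O(\kappa\|z_k-\bar z\|)$ and hence vanishes at $\bar z$.

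Next I would record the Hessian subsystem: by Assumption~\ref{ass:approx-sqp} and the characterization \eqref{eq:iss-newton}, there is a continuous positive-definite $V_2$ with $V_2(\Psi(B,z))\le\alpha_2 V_2(B)+\gamma_2\|z-\bar z\|$ and $\alpha_2<\operatorname{id}$. I would then establish well-posedness of the coupled step, which is needed because $B_{k+1}$ feeds the quadratic program defining $z_{k+1}$ while $z_{k+1}$ feeds $\Psi$. Using the Lipschitz single-valued localization $s$ of the quasi-Newton step from Theorem~\ref{thm:perturbed-newton} — whose Lipschitz modulus in the Hessian perturbation is, by Corollary~\ref{cor:pertubed-gradient}, of order $\kappa\|z_k-\bar z\|$ — and Lipschitz continuity of $\Psi$ in its second argument, the map $B\mapsto\Psi\bigl(B_k,\,s(z_k,v(z_k,B))\bigr)$, where $v(z_k,B)$ is the mismatch induced by using $B$ in place of $\nabla^2 L(x_k,y_k)$, is a contraction on a small ball around $\bar B$ whenever $z_k$ is close enough to $\bar z$; its fixed point is $B_{k+1}$, which then determines $z_{k+1}=s(z_k,\cdot)$. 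Hence the closed-loop map $(z_k,B_k)\mapsto(z_{k+1},B_{k+1})$ is well defined and single-valued on a neighbourhood of $(\bar z,\bar B)$.

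I would then close the loop with a composite Lyapunov function $W(z,B)=\|z-\bar z\|+c\,V_2(B)$ for a suitable $c>0$. Substituting the $z$-estimate into $\gamma_2$ and using that $\kappa\|z_k-\bar z\|$ is small, the $B$-estimate can be solved for $\|B_{k+1}-\bar B\|$ in terms of $V_2(B_k)$ and $\|z_k-\bar z\|^2$; feeding that back into the $z$-estimate and shrinking the neighbourhood, one obtains $W(z_{k+1},B_{k+1})\le\rho\,W(z_k,B_k)$ for some $\rho<1$. Since the closed loop is autonomous, $W$ being such a Lyapunov function yields local asymptotic stability of $(\bar z,\bar B)$, and projecting onto $z$ gives $z_k\to\bar z$, the claimed local asymptotic stability of the quasi-Newton SQP with Hessian update.

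The main obstacle is the algebraic coupling between $B_{k+1}$ and $z_{k+1}$: one must simultaneously guarantee well-posedness of the coupled step and extract one-step contraction estimates through this implicit dependence, while closing a small-gain loop in which the ISS gain $\gamma_2$ of $\Psi$ is \emph{not} assumed small. The whole argument hinges on the refinement in Corollary~\ref{cor:pertubed-gradient} (equivalently \eqref{eq:quadratic-convergence}): both the one-step $z$-contraction and the $z$-to-Hessian-error sensitivity are governed by $\|z_k-\bar z\|$, which lets every contraction and small-gain constant be pushed below $1$ by restricting to a sufficiently small neighbourhood, with no quantitative compatibility hypothesis relating the two subsystems. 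A secondary point to verify is that $\bar B=\nabla^2 L(\bar x,\bar y)$ is indeed the equilibrium selected by the loop, i.e.\ that $z_k\to\bar z$ drives $B_k\to\bar B$; this is built into the ISS estimate \eqref{eq:iss-newton} for $\Psi$.
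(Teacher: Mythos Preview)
Your approach matches the paper's: cast \eqref{eq:approx-sqp} as the perturbed Newton step \eqref{eq:perturbed-gradient} with disturbance $v_k = B_{k+1}-\nabla^2 L(x_k,y_k)$, invoke Corollary~\ref{cor:pertubed-gradient} for the $z$-subsystem, use the ISS assumption on $\Psi$ for the Hessian subsystem, and close the loop with a composite Lyapunov function by exploiting that both the contraction rate and the gain $\gamma$ vanish as $z_k\to\bar z$. You are in fact more careful than the paper, which works directly with the pair $(z_k,v_k)$ and the unweighted sum $\|z_k-\bar z\|+\|v_k\|$ but does not explicitly address the algebraic coupling $B_{k+1}\leftrightarrow z_{k+1}$ that you resolve via a fixed-point argument.
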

\begin{proof}
    Note that the KKT system of \eqref{eq:approx-sqp} can be written in the form of \eqref{eq:perturbed-newton} with $z_k = (x_k,y_k)$,
    \begin{align*}
        H(z_k,v_k) = \begin{pmatrix}
            \nabla^2 L(x_k, y_k) + v_k & \nabla g(x_k)^* \\ \nabla g(x_k) & 0
        \end{pmatrix}
    \end{align*}
    and $v_k = B_{k+1} - \nabla^2 L(x_k, y_k)$. By virtue of Corollary~\ref{cor:pertubed-gradient} and Assumption~\ref{ass:approx-sqp}, we have that
    \begin{align*}
        \| z_{k+1} - \bar z \| \leq \alpha_1 \| z_k - \bar z \| + \gamma \| v_k \|
    \end{align*}
    and 
    \begin{align*}
        \| v_{k+1} \| \leq \alpha_2 \| v_k \| + \gamma_B \| z_{k+1} - \bar z \|
    \end{align*}
    with $\alpha_1, \alpha_2 \in [0, 1)$, $\gamma, \gamma_B > 0$, and $\alpha_1, \gamma \to 0$ as $z_k \to \bar z$. Combining these results, we obtain
    \begin{align}
        \|z_{k+1} - \bar z \| + \|v_{k+1}\| \leq \bar \alpha (\|z_k - \bar z\| + \|v_k\|)
    \end{align}
    with $\bar \alpha = \max\{\alpha_1(1+\gamma_B), \alpha_2 + \gamma(1+\gamma_B)\}$; {assuming that $z_k$ is} sufficiently close to $\bar z$ such that $\bar \alpha < 1$ gives the desired result.
\end{proof}

This result can be {easily} extended to local ISS of approximated sequential programming for parametrized (perturbed) nonlinear programs.

\subsection{Augmented Lagrangian Method}
The augmented Lagrangian method solves the nonlinear program \eqref{eq:nonlinearopt} by iterating over
\begin{subequations}
    \label{eq:alm}
\begin{align}
    \label{eq:alm-primal}
    x_{k+1} &\in \argmin_{x \in C} \big\{ h(x,v_k) + \langle y_k, g(x,v_k) \rangle + \frac{\varrho}{2} \|g(x,v_k)\|^2 \big\} \\
    \label{eq:alm-dual}
    y_{k+1} &= y_k + \varrho g(x_{k+1},v_k)
\end{align}
\end{subequations}
for some penalty $\varrho > 0$ {and disturbance $v_k \in V$}. The cost function in \eqref{eq:alm-primal} is the titular {\em augmented Lagrangian}, parametrized in the dual variable $y_k$, and the necessary conditions can be written as {a} parametrized generalized equation
\begin{multline}
    \label{eq:alm-kkt-primal}
    \nabla h(x,v_k) + \nabla g(x,v_k)^* y_k \\ + \varrho \nabla g(x,v_k)^* g(x,v_k) + N_C(x) \ni 0
\end{multline}
provided that $h$ and $g$ are continuously differentiable. A classical result \cite{Bertsekas1982a} states that, under mild assumptions and for sufficiently large (but finite) value of $\varrho$, {the function minimized in} \eqref{eq:alm-primal} {with $v_k = 0$} becomes locally strictly convex and \eqref{eq:alm-dual} can be interpreted as gradient ascent for the dual problem. This \added[id=IK]{also} corresponds to strong regularity of $\eqref{eq:alm-kkt-primal}$ for all $y_k$ around $\bar y$.

\begin{assumption}
    \label{ass:alm}
    Eq.~\eqref{eq:nonlinearopt} {with $v = 0$} has an optimal solution $(\bar x, \bar y) \in X \times Y$ and \eqref{eq:kkt} is strongly regular at $(\bar x, \bar y)$ for $0$.
\end{assumption}

An immediate consequence is strong \added[id=TC]{sub}regularity of \eqref{eq:alm-kkt-primal} for sufficiently large penalties; {to that extent, we introduce}
\begin{align*}
    f_\varrho(x,y,y_k,v_k) = \begin{pmatrix}
        \nabla h(x,v_k) + \nabla g(x,v_k)^* y \\
        g(x,v_k) + \varrho^{-1} (y_k - y)
    \end{pmatrix}
\end{align*}
and study the augmented KKT system as follows.

\begin{lemma}
    \label{lem:alm}
    Under Assumption~\ref{ass:alm}, there exist {constants} $\varrho_0 > 0$ {and $k_{\varrho_0} > 0$} such that, {for all $\varrho \geq \varrho_0$,}
    \begin{ass-enumerate}
        \item $f_\varrho(\cdot,\cdot,\bar y,0) + N_{C \times X^*}$ is strongly \added[id=TC]{sub}regular at $(\bar x,\bar y)$ for $0$ {with constant $k_\varrho \in (0, k_{\varrho_0}]$};
        \item Eq.~\eqref{eq:alm-kkt-primal} is strongly \added[id=TC]{sub}regular at $\bar x$ for $0$ with {constant $k_\varrho \in (0, k_{\varrho_0}]$} if $y_k = \bar y$ {and $v_k = 0$}.
    \end{ass-enumerate}
\end{lemma}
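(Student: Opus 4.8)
The plan is to recognise the map in part~(a) as a small Lipschitz perturbation of the strongly regular KKT map of \eqref{eq:nonlinearopt} and then apply the perturbation proposition for strongly (sub)regular mappings recalled in the Preliminaries (\cite[Theorems~8.6 and~12.2]{Dontchev2021}). Concretely, I would write $\Psi_0$ for the left-hand side of the unperturbed KKT system \eqref{eq:kkt} (at $v = 0$), namely $\Psi_0(x,y) = \bigl(\nabla h(x,0) + \nabla g(x,0)^* y,\ g(x,0)\bigr) + N_{C\times X^*}\bigl((x,y)\bigr)$, which by Assumption~\ref{ass:alm} is strongly regular at $(\bar x,\bar y)$ for $0$, hence also strongly subregular there with some constant $\kappa_0 \ge 0$, and has $0 \in \Psi_0(\bar x,\bar y)$. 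Equipping $X\times Y$ with the norm $\|(x,y)\| = \|x\| + \|y\|$, the identity $f_\varrho(x,y,\bar y,0) + N_{C\times X^*}\bigl((x,y)\bigr) = g_\varrho(x,y) + \Psi_0(x,y)$ holds with $g_\varrho(x,y) = \bigl(0,\ \varrho^{-1}(\bar y - y)\bigr)$, an affine map satisfying $g_\varrho(\bar x,\bar y) = 0$ and globally Lipschitz with constant $\varrho^{-1}$.

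For part~(a), I would fix any $\varrho_0 > \kappa_0$; then for every $\varrho \ge \varrho_0$ the Lipschitz constant $\varrho^{-1}$ of $g_\varrho$ lies in $[0,\kappa_0^{-1})$, so the perturbation proposition gives that $g_\varrho + \Psi_0 = f_\varrho(\cdot,\cdot,\bar y,0) + N_{C\times X^*}$ is strongly subregular (indeed strongly regular, but subregularity suffices for the sequel) at $(\bar x,\bar y)$ for $g_\varrho(\bar x,\bar y)+0 = 0$ with constant $k_\varrho = \kappa_0/(1 - \kappa_0\varrho^{-1})$. Since $\varrho \mapsto k_\varrho$ decreases on $(\kappa_0,\infty)$, taking $k_{\varrho_0} = \kappa_0/(1 - \kappa_0\varrho_0^{-1})$ yields $k_\varrho \in (0,k_{\varrho_0}]$ for all $\varrho \ge \varrho_0$, which is exactly~(a).

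For part~(b), I would eliminate the dual variable. Writing $\Phi_\varrho$ for the left-hand side of \eqref{eq:alm-kkt-primal} at $y_k = \bar y$ and $v_k = 0$, the $y$-component of the inclusion $(w,0) \in f_\varrho(x,y,\bar y,0) + N_{C\times X^*}\bigl((x,y)\bigr)$ forces $y = \bar y + \varrho\,g(x,0)$ (the second block of $N_{C\times X^*}$ being single-valued), and substituting this into the $x$-component recovers $w \in \Phi_\varrho(x)$. Hence, for $x$ close to $\bar x$ with $w \in \Phi_\varrho(x)$, the point $y := \bar y + \varrho\,g(x,0)$ is close to $\bar y$ (continuity of $g$, $g(\bar x,0)=0$) and $(x,y)$ solves the perturbed joint system with right-hand side $(w,0)$; part~(a) then gives $\|x-\bar x\| \le \|(x,y)-(\bar x,\bar y)\| \le k_\varrho\,\|(w,0)\| = k_\varrho\,\|w\|$, and the same construction with $w=0$ shows $\bar x$ is isolated in $\Phi_\varrho^{-1}(0)$. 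So $\Phi_\varrho$ is strongly subregular at $\bar x$ for $0$ with constant $k_\varrho \le k_{\varrho_0}$.

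The computations are routine once the perturbation proposition is in hand; I expect the only delicate points to be bookkeeping ones: (i) confirming that strong regularity of $\Psi_0$ hands down strong subregularity with a usable constant $\kappa_0$, and that the splitting $f_\varrho + N_{C\times X^*} = g_\varrho + \Psi_0$ really has $g_\varrho$ with Lipschitz constant exactly $\varrho^{-1}$ in the product norm $\|(x,y)\| = \|x\|+\|y\|$; and (ii) in part~(b), shrinking the neighbourhood of $\bar x$ enough that the map $x \mapsto (x,\bar y + \varrho g(x,0))$ sends it into the neighbourhood on which the subregularity estimate of part~(a) is valid. Neither step is deep.
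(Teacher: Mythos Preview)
Your proposal is correct and follows essentially the same route as the paper: both arguments split $f_\varrho(\cdot,\cdot,\bar y,0)+N_{C\times X^*}$ into the strongly (sub)regular KKT map plus the $\varrho^{-1}$-Lipschitz perturbation $(x,y)\mapsto(0,\varrho^{-1}(\bar y-y))$, invoke \cite[Theorem~12.2]{Dontchev2021} to obtain the constant $k_\varrho=\varrho\kappa/(\varrho-\kappa)$ (your $\kappa_0/(1-\kappa_0\varrho^{-1})$ is the same number), note its monotonicity in $\varrho$, and then for part~(b) substitute $y=\bar y+\varrho\,g(x,0)$ to reduce the primal inclusion to the joint one. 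Your write-up is in fact slightly more explicit than the paper's about the neighbourhood bookkeeping and the isolatedness of $\bar x$ in $\Phi_\varrho^{-1}(0)$.
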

\begin{proof}
    We observe that since \eqref{eq:kkt} is strongly \added[id=TC]{sub}regular at $(\bar x,\bar y)$ for $0$ with some constant $\kappa > 0$, the set-valued mapping
    \begin{align}
        \label{eq:kkt-penalty}
        F_\varrho(x,y) \Def=
        f_\varrho(x,y,\bar y,0)
        + N_{C \times X^*}((x,y))
    \end{align}
    is strongly \added[id=TC]{sub}regular at $(\bar x,\bar y)$ for $0$ with constant $k_\varrho = \varrho\kappa/(\varrho - \kappa)$ for all $\varrho > \kappa$ \cite[Theorem~12.2]{Dontchev2021}. Note that $k_\varrho$ is strictly decreasing as $\varrho \to \infty$. Substituting $y_x = \bar y + \varrho g(x,0)$, we have that $F_\varrho(x,y_x) \ni (\delta,0)$ if and only if
    \begin{align*}
        \nabla h(x,0) + \nabla g(x,0)^* \bar y + \varrho \nabla g(x,0)^* g(x,0) + N_C(x) \ni \delta 
    \end{align*}
    for all $\delta$ around $0$. Hence, \eqref{eq:alm-kkt-primal} is strongly \added[id=TC]{sub}regular at $\bar x$ for $0$ with constant $k_\varrho \leq k_{\varrho_0}$ if $y_k = \bar y$ and $\varrho \geq \varrho_0 > \kappa$.
\end{proof}

We show that the augmented Lagrangian method is an instance of the multistep Newton{-type} method \eqref{eq:multivariate-newton} for the generalized equation \eqref{eq:kkt} in $(x,y)$, hence proving local input-to-state stability. %of the augmented Lagrangian method by a variation of Theorem~\ref{thm:multivariate-newton}.
Note that our approach does not require $f$ to be twice differentiable.

\begin{proposition}
    Under Assumption~\ref{ass:alm}, the iteration~\eqref{eq:alm} is locally input-to-state stable around $(\bar x, \bar y)$ for all $\varrho \geq \bar \varrho > 0$.
\end{proposition}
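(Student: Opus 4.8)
The plan is to recognize the augmented Lagrangian iteration \eqref{eq:alm} as an instance of the multistep Newton-type method \eqref{eq:multivariate-newton} applied to the KKT system \eqref{eq:kkt}, and then invoke Theorem~\ref{thm:multivariate-newton}. The inner step \eqref{eq:multivariate-newton-inner} is identified with the primal subproblem: \eqref{eq:alm-primal} is solved if and only if its necessary conditions \eqref{eq:alm-kkt-primal} hold, so I set $\tilde f(x,y_k,v_k)$ to be the left-hand side of \eqref{eq:alm-kkt-primal} without the normal cone and $\tilde F = N_C$. The outer step \eqref{eq:multivariate-newton-outer} must reproduce the dual update \eqref{eq:alm-dual}; here I use $f_\varrho$ introduced before Lemma~\ref{lem:alm}, noting that $y_{k+1} = y_k + \varrho\, g(x_{k+1},v_k)$ is exactly the statement that the second block of $f_\varrho(x_{k+1},y_{k+1},y_k,v_k)$ vanishes, while the first block of $f_\varrho$ together with $N_C$ reproduces the primal stationarity of the original \eqref{eq:kkt}. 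So $f_{Hy}(x_{k+1},y_{k+1},y_k,v_k) = f_\varrho(x_{k+1},y_{k+1},y_k,v_k)$ and $F = N_{C\times X^*}$.

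Next I would verify that Assumption~\ref{ass:multivariate-newton} holds for this identification, drawing on Lemma~\ref{lem:alm} and Assumption~\ref{ass:alm}. Item (b) is precisely Lemma~\ref{lem:alm}(b): for $\varrho \geq \varrho_0$ the primal generalized equation \eqref{eq:alm-kkt-primal} is strongly subregular at $\bar x$ for $0$ when $y_k = \bar y$, $v_k = 0$, with constant $\tilde\kappa = k_\varrho \leq k_{\varrho_0}$. Items (c)--(d) follow from the standing smoothness hypotheses of Section~\ref{sec:applications} (continuous differentiability of $h,g$ in $x$ uniformly in $v$, Lipschitz in $v$), since $\tilde f$ is built from $\nabla h$, $\nabla g$, and $g$; the dependence on $y_k$ is affine and the dependence on $v_k$ is Lipschitz, giving the constant $\gamma_w$. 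For items (e)--(h), I need the perturbed approximation $f_H$ of \eqref{eq:kkt}: take $H(\xi,\eta,v)$ so that $f_{Hy}$ equals $f_\varrho$, which amounts to choosing $H_y$ in the sense of Remark~\ref{rem:multivariate-operator} with a $-\varrho^{-1}$ entry in the dual block and the remaining structure from the gradients of $h,g$. Strong regularity of $f_H(\cdot,\bar x,\cdot,\bar y,0)+F$ (item (h)) is Lemma~\ref{lem:alm}(a): $f_\varrho(\cdot,\cdot,\bar y,0)+N_{C\times X^*}$ is strongly subregular, and one upgrades subregularity to regularity here because \eqref{eq:kkt} itself is strongly regular by Assumption~\ref{ass:alm} and $f_\varrho$ differs from the KKT map only by the $\varrho^{-1}(y_k - y)$ term, whose Lipschitz constant $\varrho^{-1}$ is made smaller than $\kappa^{-1}$ for $\varrho$ large (the argument already used inside Lemma~\ref{lem:alm}, combined with the perturbation Proposition). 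Finally $\kappa\gamma_y < 1$: since $f_\varrho$ is an exact (derivative-free in the dual) Newton map here, $\gamma_y$ — the Lipschitz modulus of $f_H$ in the second slot $\eta$ — is controlled by $\varrho^{-1}$ and the variation of $\nabla g$, hence can be made arbitrarily small by enlarging $\varrho$, so $\kappa\gamma_y < 1$ for all $\varrho \geq \bar\varrho$ with $\bar\varrho \geq \varrho_0$ suitably chosen.

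With Assumption~\ref{ass:multivariate-newton} in force, Theorem~\ref{thm:multivariate-newton} yields, for $(x_0,y_0)$ and $\mathbf v$ close to $(\bar x,\bar y,0)$, a generated sequence with $\{y_k\}$ unique, the outer update \eqref{eq:multivariate-newton-outer} locally input-to-state stable with gain $\kappa\gamma_v$, and the combined estimate $\|(x_k,y_k)-(\bar x,\bar y)\| \leq \alpha_k\|(x_0,y_0)\| + \gamma_\infty\|\mathbf v\|_\infty$ with $\alpha_k \to 0$. This is exactly local input-to-state stability of \eqref{eq:alm} around $(\bar x,\bar y)$. Taking $\bar\varrho = \max\{\varrho_0, \kappa, \text{the threshold making } \kappa\gamma_y<1\}$ gives the claim for all $\varrho \geq \bar\varrho$.

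I expect the main obstacle to be the bookkeeping in the second and third steps: writing the dual update \eqref{eq:alm-dual} in the precise algebraic form demanded by \eqref{eq:multivariate-newton-outer}, i.e.\ exhibiting the operator $H$ (equivalently $H_y$) whose associated $f_{Hy}$ reproduces $f_\varrho$, and then checking that the resulting $f_H$ meets the regularity items (e)--(h) of Assumption~\ref{ass:multivariate-newton} with constants uniform over $\varrho \geq \bar\varrho$. The strong-regularity upgrade in item (h) — going from the subregularity delivered by Lemma~\ref{lem:alm}(a) to the strong regularity that Assumption~\ref{ass:multivariate-newton}(h) nominally requires — is the delicate point; it relies on \eqref{eq:kkt} being strongly (not merely sub-) regular under Assumption~\ref{ass:alm}, and on $f_\varrho$ being a small ($O(\varrho^{-1})$) perturbation of the KKT map so that Proposition~1 (the perturbation result for strong regularity) applies. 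Once that is in place, the rest is a direct appeal to Theorem~\ref{thm:multivariate-newton}.
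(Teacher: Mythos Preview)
Your proposal is correct and follows essentially the same route as the paper: cast \eqref{eq:alm} as an instance of the multistep Newton-type method \eqref{eq:multivariate-newton} for the KKT system, identify $f_{Hy}$ with $f_\varrho$ via Remark~\ref{rem:multivariate-operator}, verify Assumption~\ref{ass:multivariate-newton} through Lemma~\ref{lem:alm}, and invoke Theorem~\ref{thm:multivariate-newton}. Two minor refinements: with the explicit $H_y = (\nabla g(\xi,v)^*,\, -\varrho^{-1})$ the $\nabla g$-contribution in the $\eta$-direction cancels, so $\gamma_y = \varrho^{-1}$ exactly (not merely ``controlled by $\varrho^{-1}$ and the variation of $\nabla g$''), and the threshold is simply $\bar\varrho^{-1}k_{\varrho_0} < 1$; and your ``upgrade'' in item~(h) is indeed available directly, since the perturbation argument in Lemma~\ref{lem:alm} applied to the \emph{strongly regular} KKT map of Assumption~\ref{ass:alm} via Proposition~1 yields strong regularity (not only subregularity) of $f_\varrho(\cdot,\cdot,\bar y,0)+N_{C\times X^*}$.
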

\begin{proof}
    Let $(x_{k+1},y_{k+1})$ be the result of \eqref{eq:alm} {for a given $(y_k,v_k) \in Y \times V$}; then
    %We observe that solving the generalized equation 
    \begin{align}
        \label{eq:alm-newton}
        f_\varrho(x_{k+1},y_{k+1},y_k,v_k) + N_{C \times X^*}((x_{k+1},y_{k+1})) \ni 0
    \end{align}
    for any $\varrho > 0$.
    Eq.~\eqref{eq:alm-newton}
    corresponds to a partial Newton step for \eqref{eq:kkt} in the sense of \eqref{eq:multivariate-newton-outer} and Remark~\ref{rem:multivariate-operator}, where
    \begin{multline*}
        f_{Hy}(\xi,y,\eta,v) = \\
        \begin{pmatrix}
            \nabla h(\xi,v) + \nabla g(\xi,v)^* \eta \\ g(\xi,v)
        \end{pmatrix}
        +
        \begin{bmatrix}
            \nabla g(\xi,v)^* \\ -\varrho^{-1}
        \end{bmatrix}
        (y - \eta)
        % + N_{C \times X^*}(x_{k+1},y_{k+1}) \ni 0
    \end{multline*}
    and $f_{Hy}(\xi,y,\cdot,v)$ is Lipschitz continuous with constant $\varrho^{-1}$ uniformly in $(\xi,y,v)$. Moreover, $f_{Hy}(\cdot,\cdot,\bar y,0) + N_{C \times X^*}$ is strongly \added[id=TC]{sub}regular with constant $k_\varrho \leq k_{\varrho_0}$ at $(\bar x,\bar y)$ for $0$ for all $\varrho \geq \varrho_0$ by virtue of Lemma~\ref{lem:alm}. Pick $\bar\varrho \geq \varrho_0$ such that $\bar\varrho^{-1} k_{\varrho_0} < 1$; the desired result follows from Theorem~\ref{thm:multivariate-newton} for any $\varrho \geq \bar\varrho$.
\end{proof}

% \subsection{Decomposed Optimization}

% \clearpage
\section{Conclusions}
{Newton methods for generalized equations play a major role in nonlinear optimization. Our local input-to-state stability result for the perturbed Josephy-Newton method enables the study of optimization algorithms interconnected with dynamic systems, such as in optimization-based control, under disturbed or uncertain conditions. In addition, our locally input-to-state stable multistep Newton-type method allows for advanced optimization techniques as demonstrated on the augmented Lagrangian method. Further work will focus on relaxations of strong regularity and Lipschitz continuity conditions within the general ISS framework.}

\addtolength{\textheight}{-14.75cm}   % This command serves to balance the column lengths
                                  % on the last page of the document manually. It shortens
                                  % the textheight of the last page by a suitable amount.
                                  % This command does not take effect until the next page
                                  % so it should come on the page before the last. Make
                                  % sure that you do not shorten the textheight too much.

%%%%%%%%%%%%%%%%%%%%%%%%%%%%%%%%%%%%%%%%%%%%%%%%%%%%%%%%%%%%%%%%%%%%%%%%%%%%%%%%

%%%%%%%%%%%%%%%%%%%%%%%%%%%%%%%%%%%%%%%%%%%%%%%%%%%%%%%%%%%%%%%%%%%%%%%%%%%%%%%%

%%%%%%%%%%%%%%%%%%%%%%%%%%%%%%%%%%%%%%%%%%%%%%%%%%%%%%%%%%%%%%%%%%%%%%%%%%%%%%%%

\section*{Appendix}
% \subsection{Implicit function theorem}
We provide \deleted{an} implicit function theorem\added{s} for generalized equations with multiple parameters, extending \cite[Theorems~8.5 and 12.4]{Dontchev2021}. To that extent, define
\begin{align*}
    \widehat \lipschitz_x(f; (\bar x,\bar p)) = \limsup_{\substack{x_1,x_2 \to \bar x, x_1 \neq x_2 \\ p \to \bar p}} \frac{\|f(x_1,p) - f(x_2,p)\|}{\|x_1 - x_2\|}
\end{align*}
for $f: X \times P \to Y$, and note that $f(\cdot, p)$ is Lipschitz continuous with constant $\gamma$ uniformly in $p$ around $(\bar x, \bar p)$ if and only if $\widehat \lipschitz_x(f; (\bar x,\bar p)) \leq \gamma$.
We consider the parametrized generalized equation
\begin{align}
    \label{eq:generalized-multiple}
    f(x,p_1,p_2) + F(x) \ni 0
\end{align}
with solution map
\begin{align*}
    S: p = (p_1,p_2) \mapsto \{ x \in X \, | \, \text{$(x,p)$ solves \eqref{eq:generalized-multiple}} \}
\end{align*}
and $P = P_1 \times P_2$.

\begin{theorem}
    \label{thm:generalized-multiple}
    Let $(\bar x, \bar p) \in \graph S$ and suppose that $h: X \to Y$ satisfies
    \begin{ass-enumerate}
        \item $f(\bar x, \bar p) = h(\bar x)$;
        \item $h + F$ is strongly \added{sub}regular with constant $\kappa$ at $\bar x$ for~$0$;
        \item $f(\cdot,p) - h$ is Lipschitz continuous with constant $\mu$ uniformly in $p$ at $(\bar x,\bar p)$;
        \item $f(x,\cdot,p_2)$ and $f(x,p_1,\cdot)$ are Lipschitz continuous uniformly in $(x,p)$ at $(\bar x,\bar p)$;
    \end{ass-enumerate}
    and $\kappa \mu < 1$;
    then the solution $S(\cdot)$ of \eqref{eq:generalized-multiple} has \added{the isolated calmness property} at $\bar p$ for $\bar x$ satisfying
    \begin{multline*}
        \| x - \bar x \| \leq \omega \, \widehat \lipschitz_{p_1}(f; (\bar x, \bar p)) \| p_1 - \bar p_1 \| \\ + \omega \, \widehat \lipschitz_{p_2}(f; (\bar x, \bar p)) \| p_2 - \bar p_2 \|
    \end{multline*}
    with $\omega = (1 - \kappa \mu)^{-1} \kappa$ \added{for all $(p_1, p_2, x) \in \graph S$ in a neighbourhood of $(\bar x, \bar p)$}.
\end{theorem}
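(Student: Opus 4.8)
The plan is to reduce the multi-parameter equation \eqref{eq:generalized-multiple} to the strongly subregular mapping $h + F$ and then close the loop with the small-gain condition $\kappa\mu < 1$, in the spirit of the single-parameter result \cite[Theorem~12.4]{Dontchev2021}. Note first that, unlike the strong-regularity (implicit function) case, no contraction or fixed-point argument is needed here: only the isolated calmness bound on $S$ is claimed, not the existence of a single-valued localization. Hence it suffices to estimate $\|x - \bar x\|$ for points $(x,p) = (x,p_1,p_2) \in \graph S$ that already lie near $(\bar x,\bar p)$, and the ``isolated'' part of the conclusion will fall out of the same estimate by specialising to $p = \bar p$.

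First I would invoke hypothesis~(b): strong subregularity of $h + F$ at $\bar x$ for $0$ with constant $\kappa$ furnishes neighbourhoods $V$ of $\bar x$ and $W$ of $0$ such that $\|x - \bar x\| \le \kappa\|w\|$ whenever $x \in V$ and $w \in (h+F)(x) \cap W$. Then, for $(x,p_1,p_2) \in \graph S$, the inclusion $0 \in f(x,p) + F(x)$ rewrites as $-(f(x,p) - h(x)) \in h(x) + F(x)$; since $f(\bar x,\bar p) = h(\bar x)$ by~(a) and $f-h$ is continuous near $(\bar x,\bar p)$ by (c)--(d), the residual $f(x,p) - h(x)$ tends to $0$ as $(x,p) \to (\bar x,\bar p)$, so on a small enough neighbourhood one has $x \in V$ and $-(f(x,p)-h(x)) \in W$, giving
\begin{align*}
    \|x - \bar x\| \le \kappa\,\|f(x,p_1,p_2) - h(x)\| .
\end{align*}

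Next I would telescope the residual in the two parameter blocks and in $x$, using $h(\bar x) = f(\bar x,\bar p)$,
\begin{multline*}
    f(x,p_1,p_2) - h(x) = \big[f(x,p_1,p_2) - f(x,\bar p_1,p_2)\big] + \big[f(x,\bar p_1,p_2) - f(x,\bar p_1,\bar p_2)\big] \\ + \big[\big(f(x,\bar p) - h(x)\big) - \big(f(\bar x,\bar p) - h(\bar x)\big)\big],
\end{multline*}
and bound the three brackets, respectively, by $\widehat\lipschitz_{p_1}(f;(\bar x,\bar p))\,\|p_1 - \bar p_1\|$, $\widehat\lipschitz_{p_2}(f;(\bar x,\bar p))\,\|p_2 - \bar p_2\|$ (both finite by~(d)), and $\mu\|x-\bar x\|$ (by~(c)). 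Substituting into the previous display and using $\kappa\mu < 1$ to move the $\|x-\bar x\|$ term to the left-hand side yields the stated estimate with $\omega = \kappa/(1-\kappa\mu)$; finally, specialising to $p = \bar p$ forces $\|x-\bar x\| \le \kappa\mu\|x-\bar x\|$, hence $S(\bar p)\cap V = \{\bar x\}$, which together with the estimate is exactly the isolated calmness property of $S$ at $\bar p$ for $\bar x$.

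The main obstacle is matching the coefficients $\widehat\lipschitz_{p_i}$ exactly. Because these moduli are defined as limits superior as $(x,p)\to(\bar x,\bar p)$, on any fixed neighbourhood the relevant difference quotients are only bounded by $\widehat\lipschitz_{p_i}(f;(\bar x,\bar p)) + \varepsilon$; one therefore first obtains the estimate with $\widehat\lipschitz_{p_i} + \varepsilon$ on an $\varepsilon$-dependent neighbourhood and reads the clean inequality as the induced bound on the calmness modulus of $S$ (letting $\varepsilon\downarrow 0$). The remaining work is routine neighbourhood bookkeeping — ensuring the residual $-(f(x,p)-h(x))$ stays inside the domain $W$ where the subregularity estimate applies — which is precisely what confines the conclusion to a neighbourhood of $(\bar x,\bar p)$ in $\graph S$.
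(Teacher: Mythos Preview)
Your proof is correct and follows exactly the approach indicated by the paper, which simply remarks that the argument is analogous to \cite[Proof of Theorem~12.4]{Dontchev2021} once one splits the parameter increment as $\|f(x,p_1,p_2)-f(x,\bar p_1,\bar p_2)\|\le\gamma_1\|p_1-\bar p_1\|+\gamma_2\|p_2-\bar p_2\|$. You have essentially written out that analogous argument in full, including the $\varepsilon$-slack needed to pass from $\widehat\lipschitz_{p_i}+\varepsilon$ on a neighbourhood to the sharp modulus in the limit.
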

\begin{proof}
    The proof is analogous to \cite[Proof of Theorem~12.4]{Dontchev2021} using that
    \begin{align*}
        \| f(x,p_1,p_2) - f(x,\bar p_1,\bar p_2) \| \leq \gamma_1 \|p_1 - \bar p_1\| + \gamma_2 \|p_2 - \bar p_2\|
    \end{align*}
    for all $(x,p)$ around $(\bar x,\bar p)$ with some constants $\gamma_1, \gamma_2 \geq 0$ by uniform Lipschitz continuity\footnote{In fact, uniform calmness would suffice.} of $f$.
\end{proof}

% \begin{proof}
%     We prove the first inequality, the second is analogous. For any $p_2 \in P_2$, define $f'(x,p_1) \Def= f(x,p_1,p_2)$. By definition of $\widehat\lipschitz$, for any $\mu' > \mu$ there exists neighbourhood $U \subset P_2$ of $\bar p_2$ such that
%     \begin{align*}
%         \widehat \lipschitz(f' - h; (\bar x, \bar p_1)) \leq \mu'
%     \end{align*}
%     for all $p_2 \in U$. Choosing $\mu'$ sufficiently small, we have $\kappa \mu' < 1$ and, by \cite[Theorem~8.5]{Dontchev2021}, the solution map of the generalized equation
%     \begin{align*}
%         f'(x,p_1) + F(x) \ni 0
%     \end{align*}
%     has a single-valued localization $s': P_1 \to X$ at $\bar p_1$ for $\bar x$ satisfying
%     \begin{align*}
%         \widehat \lipschitz_{p_1}(s; \bar p) \leq \lipschitz(s'; \bar p_1) \leq \frac{\kappa}{1 - \kappa \mu'} \widehat \lipschitz_{p_1}(f'; (\bar x, \bar p_1))
%     \end{align*}
%     where the first inequality follows from $s' \equiv s(\cdot,p_2)$. Taking the limit as $\mu' \to \mu$ and noting that $\widehat \lipschitz_{p_1} f' \to \widehat \lipschitz_{p_1} f$ as $p_2 \to \bar p_2$ completes the proof.
% \end{proof}

\begin{corollary}
    \label{cor:generalized-multiple}
    If the assumptions of Theorem~\ref{thm:generalized-multiple} hold with $h + F$ being strongly regular with constant $\kappa$ at $\bar x$ for $0$,
    then the solution $S(\cdot)$ of \eqref{eq:generalized-multiple} has a single-valued localization $s: P_1 \times P_2 \to X$ at $\bar p$ for $\bar x$ satisfying
    \begin{align*}
        \widehat \lipschitz_{p_1}(s; \bar p) \leq \omega \, \widehat \lipschitz_{p_1}(f; (\bar x, \bar p)) \\
        \widehat \lipschitz_{p_2}(s; \bar p) \leq \omega \, \widehat \lipschitz_{p_2}(f; (\bar x, \bar p))
    \end{align*}
    with $\omega = (1 - \kappa \mu)^{-1} \kappa$.
    $\lhd$
\end{corollary}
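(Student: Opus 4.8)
The plan is to run the argument of Theorem~\ref{thm:generalized-multiple} essentially verbatim, exploiting that the only difference between the two statements is strong \emph{regularity} of $h + F$ in place of strong \emph{subregularity}. The key structural fact is the parametric contraction principle underlying \cite[Theorem~8.5]{Dontchev2021} (Robinson's implicit function theorem): if $h + F$ is strongly regular at $\bar x$ for $0$ with constant $\kappa$, and $g$ is Lipschitz in $x$ (uniformly in parameters) with constant $\mu < \kappa^{-1}$, then $(g + h + F)^{-1}$ still admits a single-valued Lipschitz localization, with the composite constant $\omega = (1-\kappa\mu)^{-1}\kappa$. So first I would set $g(x,p) = f(x,p) - h(x)$, which by assumption (c) is Lipschitz in $x$ with constant $\mu$ uniformly in $p$, and $\kappa\mu < 1$; this is exactly the setting of the strong-regularity version of Robinson's theorem.

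Next I would argue that, with the parameter $p$ entering only as an additive perturbation of the right-hand side, the single-valued localization $s(p)$ of $S$ inherits its Lipschitz modulus from the Lipschitz behaviour of $p \mapsto f(\bar x, p)$ through the factor $\omega$. Concretely, for $p, p'$ near $\bar p$ one writes $x = s(p)$, $x' = s(p')$ and uses $0 \in f(x,p) + F(x)$, $0 \in f(x',p') + F(x')$; shifting to the reference map $h + F$ and applying the strong-regularity estimate gives $\|x - x'\| \le \omega\,\|f(x,p) - f(x,p')\|$ after the perturbation term is absorbed, and then splitting $\|f(x,p_1,p_2) - f(x,p_1',p_2')\| \le \gamma_1\|p_1 - p_1'\| + \gamma_2\|p_2 - p_2'\|$ via assumption (d) yields the separate moduli. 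Passing to the limsup in the definitions of $\widehat\lipschitz_{p_1}$ and $\widehat\lipschitz_{p_2}$ — taking $p, p'$ with first or second block fixed, respectively, and $x, x' \to \bar x$ — converts the $\gamma_i$ into $\widehat\lipschitz_{p_i}(f;(\bar x,\bar p))$ and gives the two displayed bounds with the common factor $\omega$.

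Since Theorem~\ref{thm:generalized-multiple} has already established the isolated-calmness estimate and its proof is declared analogous to \cite[Proof of Theorem~12.4]{Dontchev2021}, the cleanest route is to state that the corollary follows by repeating that proof with \cite[Theorem~8.5]{Dontchev2021} substituted for \cite[Theorem~12.4]{Dontchev2021} — i.e., single-valued Lipschitz localization replacing isolated calmness throughout — and with the same per-parameter Lipschitz splitting of $f$. The main (and only real) obstacle is bookkeeping: making sure the uniform Lipschitz constants of $f$ in $(x,p)$ combine correctly so that the contraction constant stays $\kappa\mu < 1$ and the composite gain is exactly $\omega = (1-\kappa\mu)^{-1}\kappa$, and that the neighbourhoods in $p$ and $x$ on which the localization lives are chosen consistently so the limsup defining $\widehat\lipschitz_{p_i}$ is taken over admissible points. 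No genuinely new estimate is needed beyond what Theorem~\ref{thm:generalized-multiple} already contains.
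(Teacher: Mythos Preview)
Your proposal is correct and matches the paper's implicit route: the corollary is stated without proof (the $\lhd$ marks the end of the statement), precisely because it follows by rerunning the argument of Theorem~\ref{thm:generalized-multiple} with \cite[Theorem~8.5]{Dontchev2021} in place of \cite[Theorem~12.4]{Dontchev2021}, exactly as you outline. The per-parameter Lipschitz splitting and the contraction constant $\omega=(1-\kappa\mu)^{-1}\kappa$ carry over verbatim, so no additional estimate is needed.
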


\bibliographystyle{IEEEtran}
\bibliography{references}

\end{document}